\newtheorem{theorem}{Theorem}
\newtheorem{lemma}{Lemma}
\newtheorem{corollary}{Corollary}
\newtheorem{proposition}{Proposition}
\newtheorem{algorithm}{Algorithm}
\newenvironment{proof}{\begin{trivlist}
    \item[\hskip\labelsep{\it Proof.}]}{$\hfill\Box$\end{trivlist}}
\newcommand{\To}{\rightarrow}
\newcommand{\bsgamma}{\boldsymbol{\gamma}}
\newcommand{\bsk}{\boldsymbol{k}}
\newcommand{\bsx}{\boldsymbol{x}}
\newcommand{\bsh}{\boldsymbol{h}}
\newcommand{\bsg}{\boldsymbol{g}}
\newcommand{\bsy}{\boldsymbol{y}}
\newcommand{\uu}{\mathfrak{u}}
\newcommand{\icomp}{\mathtt{i}}
\newcommand{\bszero}{\boldsymbol{0}}
\newcommand{\rd}{\,\mathrm{d}}
\newcommand{\NN}{\mathbb{N}}
\newcommand{\ZZ}{\mathbb{Z}}
\newcommand{\RR}{\mathbb{R}}
\renewcommand{\pmod}[1]{\,(\bmod\,#1)}
\newcommand{\abs}[1]{\left\vert#1\right\vert}
\definecolor{darkblue}{RGB}{40,0,200}
\begin{document}

\title{\scshape  Numerical integration in  $\log$-Korobov and $\log$-cosine spaces}

\author{Josef Dick\thanks{
Josef Dick is the recipient of an Australian Research Council Queen Elizabeth II Fellowship (project number DP1097023).},
Peter Kritzer\thanks{P. Kritzer is supported by the Austrian Science Fund (FWF):
Project F5506-N26, which is a part of the Special Research Program "Quasi-Monte Carlo Methods: Theory and Applications".},
Gunther Leobacher\thanks{G. Leobacher is supported by the Austrian Science Fund (FWF): Project F5508-N26,
which is a part of the Special Research Program "Quasi-Monte Carlo Methods: Theory and Applications".},
Friedrich Pillichshammer\thanks{F. Pillichshammer is supported by the Austrian Science Fund (FWF): Project F5509-N26,
which is a part of the Special Research Program "Quasi-Monte Carlo Methods: Theory and Applications".}}

\date{}
\maketitle

\begin{abstract}
Quasi-Monte Carlo (QMC) rules are equal weight quadrature rules for approximating integrals over the $s$-dimensional 
unit cube $[0,1]^s$. One line of research studies the integration error of functions in the unit ball of so-called Korobov spaces, 
which are Hilbert spaces of periodic functions on $[0,1]^s$ with square integrable partial mixed derivatives of order $\alpha$. 
Using Parseval's identity, this smoothness can be defined for all real numbers $\alpha > 1/2$. In this setting, the condition 
$\alpha> 1/2$ is necessary as otherwise the Korobov space contains discontinuous functions for which function evaluation is not well defined.

This paper is concerned with more precise endpoint estimates of the integration error using QMC rules for Korobov spaces with 
$\alpha$ arbitrarily close to $1/2$. To obtain such estimates we introduce a $\log$-scale for functions with smoothness close to 
$1/2$, which we call $\log$-Korobov spaces. We show that lattice rules can be used to obtain an integration error of order 
$\mathcal{O}(N^{-1/2} (\log N)^{-\mu(1-\lambda)/2})$ for any $1/\mu <\lambda \le 1$, where $\mu>1$ is a certain power in the $\log$-scale. 

A second result is concerned with tractability of numerical integration for weighted Korobov spaces with product weights 
$(\gamma_j)_{j \in \mathbb{N}}$. Previous results have shown that if $\sum_{j=1}^\infty \gamma_j^\tau < \infty$ for some 
$1/(2\alpha) < \tau \le 1$ one can obtain error bounds which are independent of the dimension. In this paper we give a more refined 
estimate for the case where $\tau$ is close to $1/(2 \alpha)$, namely we show dimension independent error bounds under the condition that 
$\sum_{j=1}^\infty \gamma_j \max\{1, \log \gamma_j^{-1}\}^{\mu(1-\lambda)} <
\infty$ for some $1/\mu < \lambda \le 1$. The essential tool in our analysis
is a $\log$-scale Jensen's inequality.

The results described above also apply to integration in $\log$-cosine spaces using tent-transformed lattice rules. 
\end{abstract}

{\bf Key words:} quasi-Monte Carlo methods, numerical integration, lattice rule,
tent-transform, reproducing kernel Hilbert space, Korobov space, cosine space

{\bf MSC:} 65D30, 65D32

%\tableofcontents

\section{Classical Korobov spaces}

We study quasi-Monte Carlo (QMC) integration rules
\begin{equation*}
\frac{1}{N} \sum_{n=0}^{N-1} f(\boldsymbol{x}_n) \approx \int_{[0,1]^s} f(\boldsymbol{x}) \,\mathrm{d} \boldsymbol{x}
\end{equation*}
for the approximation of $s$-dimensional integrals where the quadrature point set 
$P = \{ \boldsymbol{x}_0, \boldsymbol{x}_1, \ldots, \boldsymbol{x}_{N-1} \} \subseteq [0,1]^s$ 
is deterministically constructed. We use the notation $\boldsymbol{x}_n = (x_{n,1}, x_{n,2}, \ldots, x_{n,s})$. 
In particular we focus on so-called lattice rules, which are QMC rules using the point set $P = \{\bsx_0, \bsx_1,\ldots, \bsx_{N-1}\}$ given by
\begin{equation}\label{lattice_points}
\bsx_n = \left(\left\{n g_1/N \right\}, \ldots, \left\{n g_s/N \right\} \right), \quad n = 0, 1, \ldots, N-1,
\end{equation}
where $g_1, \ldots, g_s \in \{1, 2, \ldots, N-1\}$ and $N$ is a prime number. 
The notation $\{\cdot \}$ in~\eqref{lattice_points} denotes the fractional part
$\{x \} = x - \lfloor x \rfloor$ for nonnegative numbers $x \in \mathbb{R}$. One of the main
questions in the study of lattice rules is the construction of a good
generating vector $\bsg = (g_1, g_2, \ldots, g_s)$; see \cite{CKN06, D04,
DKLP15, K03, KSS12, NC06, NC06b, SKJ02}.

For this, let $H$ be a Hilbert space of functions $f: [0,1]^s \to \mathbb{R}$ with inner product 
$\langle \cdot, \cdot \rangle_H$ and corresponding norm $\|\cdot \|_H$. The worst-case error
\begin{equation*}
e(H, P) = \sup_{f \in H, \|f\|_H \le 1} \left|\int_{[0,1]^s} f(\boldsymbol{x}) \,\mathrm{d} \boldsymbol{x} - 
\frac{1}{N} \sum_{n=0}^{N-1} f(\boldsymbol{x}_n) \right|
\end{equation*}
serves as a criterion for the quality of the quadrature point set $P$. Frequently one studies reproducing kernel Hilbert spaces 
$H_K$ which have a reproducing kernel $K:[0,1]^s \times [0,1]^s \to \mathbb{R}$. 
See \cite{Aron} for more information on reproducing kernel Hilbert spaces and \cite{DKS13, DP10} for background on reproducing kernels 
in the context of numerical integration. The reproducing kernel has the properties that $K(\cdot, \boldsymbol{y}) \in H_K$ for all 
$\boldsymbol{y} \in [0,1]^s$ and
\begin{equation*}
f(\boldsymbol{y}) = \langle f, K(\cdot, \boldsymbol{y}) \rangle_{H_K} \quad \mbox{for all $f\in H_K$ and all $\bsy\in[0,1]^s$,}
\end{equation*}
where $\langle \cdot, \cdot \rangle_{H_K}$ is the inner product in $H_K$. 
One class of reproducing kernel Hilbert spaces that has been studied intensively, are the Korobov spaces \cite{DKS13}.  Those consist of functions on 
$[0,1]^s$ which have square integrable derivatives up to order $\alpha$ in each
variable. Here, the reproducing kernel is given by
\begin{equation}\label{Korobov_kernel}
K_{\alpha,\bsgamma}(\boldsymbol{x}, \boldsymbol{y}) = 
\sum_{\boldsymbol{k} \in \mathbb{Z}^s} w_{\alpha,\boldsymbol{\gamma}}(\boldsymbol{k}) \mathrm{e}^{2\pi \mathrm{i} 
\boldsymbol{k} \cdot (\boldsymbol{x} - \boldsymbol{y})},
\end{equation}
where $w_{\alpha, \boldsymbol{\gamma}}(\boldsymbol{k}) = \prod_{j=1}^s w_{\alpha, \gamma_j}(k_j)$ and
\[
w_{\alpha, \gamma_j}(k_j) = 
\begin{cases} 
1 & \mbox{if $k_j=0$,} \\
\gamma_j |k_j|^{-2\alpha} & \mbox{otherwise.}\\ 
\end{cases}
\]
The inner product in this space is given by
\begin{equation*}
\langle f, g \rangle_{H_{K_{\alpha,\bsgamma}}} = \sum_{\boldsymbol{k} \in \mathbb{Z}^s} \widehat{f}(\boldsymbol{k}) 
\overline{\widehat{g}(\boldsymbol{k})} w^{-1}_{\alpha,\bsgamma}(\boldsymbol{k}),
\end{equation*}
where $\overline{a}$ denotes the complex conjugate of a complex number $a$ and where $\widehat{f}(\bsk)$ and 
$\widehat{g}(\bsk)$ are the $\bsk$-th Fourier coefficients of $f$ and $g$ respectively,
\begin{equation*}
\widehat{f}(\bsk) = \int_{[0,1]^s} f(\bsx) \mathrm{e}^{-2\pi \mathrm{i} \bsk \cdot \bsx} \,\mathrm{d} \bsx.
\end{equation*}
The sequence $\boldsymbol{\gamma} = (\gamma_j)_{j \in \mathbb{N}}$ of nonnegative real numbers are called weights,
and they determine the dependence of the variables on the dimension \cite{NW10}.

The worst-case error for numerical integration in a reproducing kernel Hilbert space is given by the formula (cf.~\cite{Hick} or \cite[Proposition~2.11]{DP10})
\begin{align*}
e(H_K, P) = & \int_{[0,1]^s} \int_{[0,1]^s} K(\boldsymbol{x}, \boldsymbol{y}) \,\mathrm{d} \boldsymbol{x} \,\mathrm{d} \boldsymbol{y} - 
\frac{2}{N} \sum_{n=0}^{N-1} \int_{[0,1]^s} K(\boldsymbol{x}, \boldsymbol{x_n}) \,\mathrm{d} \boldsymbol{x} 
+ \frac{1}{N^2} \sum_{n,m=0}^{N-1} K(\boldsymbol{x}_n, \boldsymbol{x}_m)\,,
\end{align*}
which, for the Korobov space simplifies to
\begin{equation*}
e(H_{K_{\alpha,\bsgamma}}, P) = -1 + \frac{1}{N^2} \sum_{n=0}^{N-1} \sum_{m=0}^{N-1} K_{\alpha,\bsgamma}(\boldsymbol{x}_n, \boldsymbol{x}_m).
\end{equation*}
If the QMC-rule is a lattice rule, then this formula reduces to
\begin{equation*}
e(H_{K_{\alpha,\bsgamma}}, P)=  \sum_{\bsk \in L^\perp \setminus\{\bszero\}} w_{\alpha,\bsgamma}(\bsk)
\end{equation*}
where the dual lattice $L^\perp$ is given by
\begin{equation}\label{duallat}
L^\perp = \{ \boldsymbol{\ell} \in \mathbb{Z}^s: \boldsymbol{\ell} \cdot \bsg \equiv 0 \pmod{N}\}.
\end{equation}

 In order for the sum in \eqref{Korobov_kernel} to converge absolutely (and therefore for the reproducing kernel and worst-case error to be well defined), 
we need to assume that $\alpha> 1/2$. One of the aims of this paper is to study the endpoint $\alpha$ close to $1/2$ in more detail. 
QMC rules are often applied to functions which do not satisfy the smoothness assumptions usually considered and hence it is 
of interest to study QMC for non-smooth functions. We do so by introducing a $\log$ scale in the definition of the Korobov space and we do the same also for cosine spaces considered in \cite{DNP}. This will be done in Section \ref{sec_log_kor}. In Section \ref{sec_cbc} we
present a component-by-component construction of a lattice rule which  
will be used in Section \ref{sec_err_bnds} to give  bounds on the 
integration error in $\log$-Korobov spaces.

Section \ref{sec_tract} considers tractability of integration in $\log$-Korobov
spaces, Section \ref{sec_ext} concludes with a possible extension of our 
results to iterated $\log$-Korobov  spaces. % or, more general $\log_\ell$-Korobov spaces.

\section{$\log$-Korobov spaces}\label{sec_log_kor}

\subsection{The $\log$-Korobov space}\label{subsec_log_kor}

Throughout this paper $\log$ denotes the natural logarithm.

The $\log$-Korobov space is again a reproducing kernel Hilbert space with kernel of the form \eqref{Korobov_kernel} with the weight 
$w_{\alpha,\bsgamma}$ replaced by $r_{\mu,\bsgamma}(\boldsymbol{k}) = \prod_{j=1}^s r_{\mu,\gamma_j}(k_j)$, where
$$
r_{\mu,\gamma_j}(k_j) =
\begin{cases}
1 & \mbox{if $k_j = 0$}, \\
\gamma_j |k_j|^{-1} (\log (\kappa |k_j|))^{-\mu} & \mbox{otherwise,}
\end{cases}
$$
where $\mu>1$ is a real number, $\bsgamma = (\gamma_j)_{j \in \NN}$ is a sequence of positive real numbers and $\kappa$ is a fixed real number, 
which we assume to satisfy $\kappa \ge \max\{\exp(\mathrm{e}^2),\exp(\mathrm{e}^2 \gamma_j^{1/\mu})\}$ for all $j$ (note that this implies 
$r_{\mu,\gamma_j} (k_j)\le {\rm e}^{-2\mu}<{\rm e}^{-2}$ for any $k_j\neq 0$). 
The reproducing kernel for the $\log$-Korobov space is given by 
$$K_{\log, \mu, \bsgamma}(\boldsymbol{x}, \boldsymbol{y}) = 
\sum_{\boldsymbol{k} \in \mathbb{Z}^s} r_{\mu,\bsgamma}(\boldsymbol{k}) \mathrm{e}^{2\pi \mathrm{i} 
\boldsymbol{k} \cdot (\boldsymbol{x} - \boldsymbol{y})},$$
and we denote the corresponding reproducing kernel Hilbert space by $H_{K_{\log, \mu, \bsgamma}}$. The inner product in this space is given by
\begin{equation*}
\langle f, g \rangle_{H_{K_{\log,\mu,\bsgamma}}} = \sum_{\boldsymbol{k} \in \mathbb{Z}^s} \widehat{f}(\boldsymbol{k}) 
\overline{\widehat{g}(\boldsymbol{k})} r^{-1}_{\mu,\bsgamma}(\boldsymbol{k}),
\end{equation*}

The squared worst-case integration error for functions in the $\log$-Korobov space using lattice rules is given by 
(see for instance \cite[Eq. (15)]{SW01})
\begin{equation}\label{wce}
e^2(H_{K_{\log, \mu, \bsgamma}}, P) = \sum_{\bsk \in L^\perp \setminus\{\bszero\}} r_{\mu,\bsgamma}(\bsk),
\end{equation}
where the dual lattice $L^\perp$ is given by \eqref{duallat}.

In one dimension, functions in a Korobov space $H_{K_{\alpha,\bsgamma}}$ of smoothness $\alpha > 1/2$ satisfy a H\"older condition and therefore have 
smoothness beyond continuity (in higher dimension functions in the Korobov space have finite fractional bounded variation), whereas the functions in the $\log$-Korobov space need not satisfy a H\"older condition. We illustrate this by the following proposition.

\begin{proposition}\label{prop_holder}
The (univariate) function $f$ defined by
$$f(x):=\sum_{k=1}^\infty \frac{1}{k (\log (\kappa k))^\mu} \cos(2 \pi k x) \in H_{K_{\log, \mu, 1}}$$
is uniformly continuous but not H\"older continuous.
\end{proposition}

\begin{proof}
Let $\beta>0$ and consider the sequence
$m^{\beta}(f(0)-f(m^{-1}) )$ for $m=1,2,\ldots$. We have
\begin{align*}
m^{\beta}\left(f(0)-f\left(m^{-1} \right)\right)
&= m^{\beta}\sum_{k=1}^\infty \frac{1}{k (\log(\kappa k))^\mu}
\left(1-\cos\left(\frac{2\pi k}{m}\right)\right) \\
&\ge m^{\beta}\sum_{k=1}^{m} \frac{1}{k (\log(\kappa m))^\mu}
\left(1-\cos\left(\frac{2\pi k}{m}\right)\right) \\
&= \frac{m^\beta }{( \log(\kappa m))^\mu}\frac{1}{m}\sum_{k=1}^{m} \frac{1}{\frac{k}{m}}
\left(1-\cos\left(\frac{2 \pi k}{m}\right)\right).
\end{align*}
Now we observe that
\[
\lim_{m\rightarrow \infty}\frac{1}{m}\sum_{k=1}^{m} \frac{1}{\frac{k}{m}}
\left(1-\cos\left(\frac{2 \pi k}{m}\right)\right)=\int_0^1 \frac{1}{x}(1-\cos(2\pi x))\rd x>0.
\]
(Note that $x\mapsto \frac{1}{x}(1-\cos(2\pi x))$ is continuous on $[0,1]$.)
Thus we have, for $m$ sufficiently large, that
\[
m^{\beta}\left(f(0)-f\left(m^{-1}\right)\right)
\ge \frac{m^\beta}{ (\log(\kappa m))^\mu}\int_0^1 \frac{1}{x}(1-\cos(2\pi x))\rd x
\overset{m\rightarrow\infty}{\longrightarrow}\infty\,.
\]
In other words, we have that for any $L>0$ and sufficiently large $m$
\[
f(0)-f\left(\frac{1}{m}\right)>L \left(\frac{1}{m}\right)^\beta\,,
\]
so that $f$ is not H\"older continuous with H\"older coefficient $\beta$.
\end{proof}

Figure \ref{fig:log-example} shows $f$ with $\kappa=\exp({\rm e}^2)$ and $\mu=3/2$.

\begin{figure}[h]
\begin{center}
\includegraphics{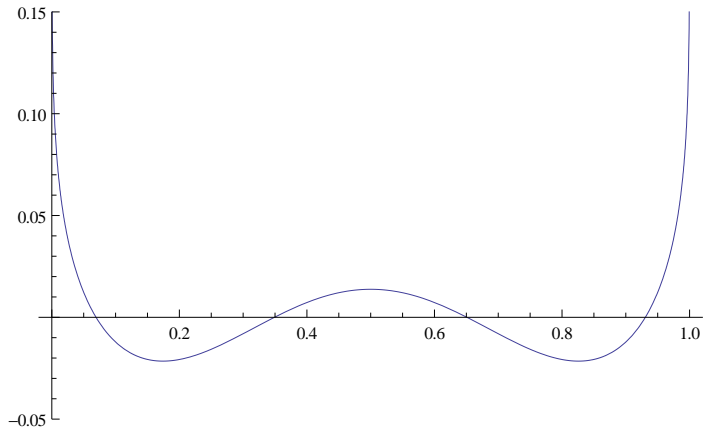}
\end{center}
\caption{A non-H\"older continuous function which lies in a $\log$-Korobov
space. }\label{fig:log-example}
\end{figure}

\subsection{The $\log$-cosine space}

To remove the periodicity assumption inherent to the $\log$-Korobov space we introduce the so-called 
$\log$-cosine space. The idea is to replace the trigonometric basis of the $\log$-Korobov space by the cosine basis
\begin{equation*}
1, \sqrt{2} \cos (\pi x), \sqrt{2} \cos (2\pi x), \sqrt{2} \cos (3\pi x), \ldots\ .
\end{equation*}
Set $\sigma_0(x) = 1$ and $\sigma_k(x) = \sqrt{2} \cos (k \pi x)$ for $k \in \mathbb{N}$. 
For vectors $\bsx \in [0,1]^s$ and $\bsk \in \mathbb{N}_0^s$ we set $\sigma_{\bsk}(\bsx) = \prod_{j=1}^s \sigma_{k_j}(x_j)$. 
The collection $(\sigma_{\bsk})_{\bsk\in\NN_0^s}$ is an orthonormal basis of $L_2([0,1]^s)$ (cf.~\cite{DNP}). 
We can define a reproducing kernel $C_{\log, \mu, \bsgamma}$ by
\begin{equation*}
C_{\log, \mu, \bsgamma}(\bsx, \bsy) =  \sum_{\bsk \in \mathbb{N}^s} r_{\mu, \bsgamma}(\bsk) \sigma_{\bsk}(\bsx) \sigma_{\bsk}(\bsy),
\end{equation*}
where $r_{\mu,\bsgamma}$ is defined as in Section~\ref{subsec_log_kor}. The corresponding reproducing kernel Hilbert space $H_{C_{\log, \mu, \bsgamma}}$ is a space of cosine series
\begin{equation*}
f(\bsx) = \sum_{\bsk \in \mathbb{N}_0^s} \widetilde{f}(\bsk) \sigma_{\bsk}(\bsx),
\end{equation*}
where the cosine coefficients are given by
\begin{equation*}
\widetilde{f}(\bsk) = \int_{[0,1]^s} f(\bsx) \sigma_{\bsk}(\bsx) \rd \bsx.
\end{equation*}
The inner product in this space is given by
\begin{equation*}
\langle f, g \rangle_{H_{C_{\log, \mu, \bsgamma}}} = 
\sum_{\boldsymbol{k} \in \mathbb{N}_0^s} \widetilde{f}(\bsk) \overline{\widetilde{g}(\bsk) } r^{-1}_{\mu, \bsgamma}(\bsk).
\end{equation*}

\subsection{The tent-transform and numerical integration in the $\log$-cosine space}

The tent-transform was first considered in the context of numerical integration in \cite{Hick02} (therein called baker's transform). 
In \cite{DNP} it was shown that numerical integration in the Korobov space 
$H_{K_{\alpha,\bsgamma}}$ using lattice rules is related to numerical integration in a certain function space based on the system 
$(\sigma_{\bsk})_{\bsk\in \NN_0^s}$ using tent-transformed lattice rules. 
We review the necessary steps to explain how this applies to  the setting considered here.

The tent-transform $\rho:[0,1] \to [0,1]$ is given by
\begin{equation*}
\rho(x) = 1 - |2 x-1|.
\end{equation*}
For vectors $\bsx \in [0,1]^s$ we apply the tent-transform component-wise, that is, we write 
$\rho(\bsx) = (\rho(x_1), \ldots, \rho(x_s))$. As in \cite[Theorem~2]{DNP} it follows that
\begin{align*}
e(H_{C_{\log, \mu, \bsgamma}}, P_{\rho}) = & -1 + \frac{1}{N^2} \sum_{n=0}^{N-1} \sum_{m=0}^{N-1} C_{\log, \mu, \bsgamma}
(\bsx_n, \bsx_m) = \sum_{\bsk \in L^\perp\setminus \{\bszero\}} r_{\mu, \bsgamma}(\bsk) =  e(H_{K_{\log, \mu, \bsgamma}}, P),
\end{align*}
where $P = \{\bsx_0, \bsx_1, \ldots, \bsx_{N-1}\}$ is a lattice point set given by \eqref{lattice_points} and where
$P_{\rho} = \{\rho(\bsx_0), \rho(\bsx_1), \ldots, \rho(\bsx_{N-1})\}$ is the corresponding tent-transformed lattice point set.

Thus all the results for the $\log$-Korobov space using lattice rules also apply to numerical integration in the 
$\log$-cosine space using tent-transformed lattice rules. For simplicity we state the results only for the 
$\log$-Korobov space and lattice rules in the following.

\section{Component-by-component construction}\label{sec_cbc}

The component-by-component construction of lattice rules was invented
independently in \cite{Kor59} and \cite{SR02}, and was developed further in
\cite{CKN06,D04,K03,KSS12,NC06,NC06b,SKJ02}. The idea is to find a good generating
vector $\bsg = (g_1,g_2, \ldots, g_s)$ 
one component at a time by minimizing the worst-case error of the $(d+1)$-dimensional lattice rule 
generated by $(g_1,\ldots,g_d, g_{d+1})$ as a function of $g_{d+1}$ in the $(d+1)$-st step, holding the components 
$g_1, g_2, \ldots, g_{d}$ fixed. In order to emphasize the dependence of the worst-case error 
$e(H_{K_{\log, \mu, \bsgamma}}, P)$ on the generating vector $\bsg$, we write $e(\bsg)$ 
for the worst-case error in $H_{K_{\log, \mu, \bsgamma}}$ using a lattice rule with generating vector $\bsg$.

\subsection{The algorithm}

\begin{algorithm}\label{algcbc}
Let $s\in\NN$ and a prime $N$ be given.
Construct $\bsg^\ast=(g_1^\ast,\ldots,g_s^\ast)\in\{1,\ldots,N-1\}^s$ as follows.
\begin{itemize}
\item Set $g_1^\ast = 1$.
\item For $d\in \{1, 2, \ldots, s-1\}$ assume that $g_1^\ast,\ldots,g_d^\ast$ have already been found. Now choose $g_{d+1}^\ast\in \{1, 2, \ldots,N-1\}$ such that
$$ e^2 (g_1^\ast,\ldots,g_{d}^\ast,g)$$
is minimized as a function of $g$.
\item Increase $d$ and repeat the second step until $(g_1^\ast,\ldots,g_s^\ast)$ is found.
\end{itemize}
\end{algorithm}

This algorithm requires one to compute the square worst-case error $ e^2 (g_1^\ast,\ldots,g_{d}^\ast,g)$ for each $g \in \{1, 2, \ldots, N-1\}$. Hence it is important to have an efficient method for computing this quantity. We describe such a method in the following subsection.

An efficient implementation, called fast CBC algorithm, using the fast Fourier transform was developed in \cite{NC06,NC06b} which applies to Korobov spaces, hence it is clear that the fast CBC algorithm also applies to the construction of good lattice rules for integration in the $\log$-Korobov space, see \cite{NC06, NC06b} for details.

\subsection{Computation of the worst-case error}

Using \eqref{wce} and the fact that $$\frac{1}{N}\sum_{n=0}^{N-1}\exp(2\pi \icomp k n/N)=\left\{
\begin{array}{ll}
1 & \mbox{ if } k \equiv 0 (\operatorname{mod}N),\\
0 & \mbox{ if } k \not\equiv 0 (\operatorname{mod}N),
\end{array}\right.$$
the squared worst-case error can be expressed as
$$e^2 (\bsg)= \sum_{\emptyset\neq\uu\subseteq \{1, \ldots, s\} }\sum_{\bsh_\uu \in \ZZ_{\ast}^{\abs{\uu}}}
r_{\mu,\bsgamma} (\bsh_\uu) \frac{1}{N} \sum_{n=0}^{N-1} \exp(2 \pi \icomp (\bsh_{\uu} \cdot \bsg_{\uu}) n/N),$$
where $\bsg_{\uu}$ is the projection of $\bsg$ onto the coordinates contained in $\uu \subseteq \{1, \ldots, s\}$, and where $\ZZ_{\ast}=\ZZ\setminus\{0\}$.
From this we further obtain
\begin{equation*}
e^2 (\bsg)= -1+\frac{1}{N}\sum_{n=0}^{N-1}
\prod_{j=1}^s \left[1+\gamma_j \sum_{h_j \in \ZZ_{\ast}} \frac{\exp(2 \pi
\icomp h_j g_jn/N)}{|h_j| (\log (\kappa |h_j|))^{\mu}}\right].
\end{equation*}
We have
\begin{align*}
\sum_{h \in \ZZ_{\ast}} \frac{\exp(2 \pi
\icomp h g n/N)}{|h| (\log (\kappa |h|))^{\mu}}
&=\sum_{h=1}^{N-1} \frac{\exp(2 \pi
\icomp h g n/N)}{|h| (\log (\kappa |h|))^{\mu}}
+\sum_{m\in \ZZ_\ast}\sum_{h=0}^{N-1} \frac{\exp(2 \pi
\icomp (h+m N) g n/N)}{|h+m N| (\log (\kappa |h+m N|))^{\mu}}\\
&=\sum_{h=0}^{N-1} \widehat{c}(h)
\exp(2 \pi \icomp h g n/N)\,,
\end{align*}
where
\begin{equation}
\widehat{c}(h):=\begin{cases}
\sum_{m\in \ZZ_\ast} |m N|^{-1} (\log (\kappa |m N|))^{-\mu} & \mbox{ if } h=0, \\
\sum_{m\in \ZZ} |h+m N|^{-1} (\log (\kappa |h+m N|))^{-\mu} & \mbox{ if } h\ne 0.
\end{cases}
\end{equation}
We do not have an explicit formula for $\widehat{c}$, not even for special cases of $\mu$ and $\kappa$. 
However, for evaluating the slowly converging series defining $\widehat{c}$ one can use the Euler-McLaurin summation formula, see \cite[page 806, item 23.1.30]{abst}. 
These values can then be stored and reused in the computation of the worst-case error. 

The Euler-McLaurin summation formula states that if $f \in C^{2m}([{z_1},{z_2}])$, then 
$$\sum_{l={z_1}}^{z_2} f(l) = \int_{z_1}^{z_2} f(t)\rd t+\tfrac{1}{2}(f({z_1})+f({z_2})) +\sum_{j=1}^m \frac{B_{2j}}{(2 j)!}(f^{(2j-1)}({z_2})-f^{(2j-1)}({z_1}))+(R(f,m,z_1,z_2),$$
and 
\[
    \left|(R(f,m,z_1,z_2)\right|\leq\frac{2 \zeta (2m)}{(2\pi)^{2m}}\int_{z_1}^{z_2}\left|f^{(2m)}(x)\right|\rd x \,
\]
where $B_i$ is the $i$-th Bernoulli number and $\zeta$ denotes the Riemann
$\zeta$-function. For example consider  $$f(x)=\frac{1}{(h+N x)(\log(\kappa(h+ N x)))^\mu}.$$
Let $g_{k,\beta}(x)=\frac{1}{(h+N x)^k(\log(\kappa(h+ N x)))^\beta}$. Then 
$f=g_{1,\mu}$ and 
\begin{align}
\nonumber g_{k,\beta}'(x)&=-\frac{N k}{(h+N x)^{k+1}\log(\kappa(h+ N x))^{\beta}}-\frac{N \beta}{(h+N x)^{k+1}\log(\kappa(h+ N x))^{\beta+1}}\\
\label{EMLdiff}&=-N \left(k g_{k+1,\beta}(x)+ \beta g_{k+1,\beta+1}(x)\right)\,.
\end{align}
Repeated application of the differentiation rule \eqref {EMLdiff}
gives
\begin{align*}
f'(x)&= -N (g_{2,\mu}(x)+\mu g_{2,\mu+1}(x))\\
f''(x)&= N^2 \big(2 g_{3,\mu}(x)+ \mu g_{3,\mu+1}(x)  
+2 \mu g_{3,\mu+1}(x)+\mu(\mu+1)g_{3,\mu+2}(x)\big)\\
f'''(x)&=- N^3 \big(6 g_{4,\mu}(x)+2\mu g_{4,\mu+1}(x)
+3 \mu  g_{4,\mu+1}(x)  + \mu(\mu+1) g_{4,\mu+2}(x)  \\
&\;\;+6 \mu g_{4,\mu+1}(x) +2 (\mu+1) \mu g_{4,\mu+2}(x)
+3\mu(\mu+1)g_{4,\mu+2}(x) +\mu(\mu+1)(\mu+2)g_{4,\mu+3}(x)
\big)
\end{align*}
and so on. We see by induction that
\begin{itemize}
\item $f^{(2k)}$ is positive on $(1,\infty)$;
\item $f^{(2k-1)}$ is negative on $(1,\infty)$;
\item $|f^{(k)}|$ is a linear combination of $2^k$ functions $g_{k+1,\mu+\ell}$, $\ell \in \{0,1,\ldots,k\}$, with
all coefficients less or equal $N^k\mu(\mu+1)\cdots(\mu+k-1)$.
\end{itemize}
In particular we can estimate
\begin{align*}
\left|f^{(k)}(x)\right|&\le (2N)^k \mu (\mu+1)\cdots(\mu+k-1)
g_{k+1,\mu}(x) 
\end{align*}
and 
\begin{align*}
\int_{z_1}^{\infty}\left|f^{(2m)}(x)\right|\rd x 
& = \int_{z_1}^{\infty} f^{(2m)}(x)\rd x \\
&= - f^{(2m-1)}(z_1)= |f^{(2m-1)}(z_1)| \\
&\le   
\frac{1}{2N}\left(\frac{2 N}{h+N z_1}\right)^{2m}\frac{ \mu (\mu+1)\cdots(\mu+2m-2)}{\log(\kappa(h+N z_1))^\mu}\,.
\end{align*}

Thus we get 
\begin{align*}
|(R(f,m,z_1,z_2)| &< \frac{\zeta (2m)}{(2\pi)^{2m}} \frac{1}{N}
\left(\frac{2N}{h+N z_1}\right)^{2m}\frac{ \mu (\mu+1)\cdots(\mu+2m-2)}{\log(\kappa(h+N z_1))^\mu}\\
&< \frac{\pi^2}{6} \frac{\mu (\mu+1)\cdots(\mu+2m-2)}{N}\left(\frac{1}{\pi z_1}\right)^{2m}
\end{align*}
uniformly for all $z_2>z_1$. Now $|(R(f,m,z_1,\infty)|$ can be made arbitrarily small by fixing $m$ and choosing $z_1$ big enough.

\section{Error bounds}\label{sec_err_bnds}

The classical approach for proving error bounds on the worst-case error in the
Korobov space using lattice rules employs an inequality by Johan Jensen, 
which asserts that 
\begin{equation}\label{Jensen1}
\bigg(\sum_k a_k \bigg)^\lambda \le \sum_k a_k^\lambda 
\end{equation}
for a sequence of nonnegative numbers $a_k$ and $0 < \lambda \le 1$; see \cite{DKS13,LP14}. However, in our setting, this inequality can only be used with 
$\lambda = 1$, for which it is trivial. To obtain a more precise inequality, roughly speaking, the idea is to consider the mapping
\begin{equation}\label{mapping_idea}
\frac{1}{z (\log (\kappa z))^\mu}  \mapsto \frac{1}{z (\log (\kappa z))^{\mu \lambda}}
\end{equation}
for $\mu \lambda > 1$.
%This mapping is somewhat hard to handle, since we do not have an inequality like \eqref{Jensen1} with the mapping $z\mapsto z^\lambda$ replaced by the mapping \eqref{mapping_idea}. 
%hence in the proofs we use the mapping \eqref{map_psi} below which has a similar effect.
We have the following generalization of inequality \eqref{Jensen1}:
\begin{lemma}\label{lemjensen}
Let $\phi: [0,z) \to \mathbb{R}$ be concave on $[0,z)$, where $0 < z \le \infty$, and let $0 \le a_1, a_2, \ldots$ be real numbers such that $\sum_{k=1}^n a_k < z$. Then
\[
(n-1)\phi(0)+\phi\left(\sum_{k=1}^n a_k\right)\le \sum_{k=1}^n \phi(a_k) \,.
\]

If $\sum_{k=1}^\infty a_k < z$, $\phi(0)\ge 0$ and, in addition, 
there is a $0 < z_0 \le z$ such that $\phi(z_0) \ge 0$, then 
\begin{equation}\label{Jensen2}
\phi\left(\sum_{k=1}^\infty a_k\right) \le \sum_{k=1}^\infty \phi(a_k)\,.
\end{equation}
\end{lemma}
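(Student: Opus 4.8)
The plan is to prove the finite inequality first by induction on $n$, and then pass to the limit to obtain the infinite version \eqref{Jensen2}. The base case $n=1$ is trivial (it reads $\phi(a_1) \le \phi(a_1)$). For the inductive step, the key observation is that concavity of $\phi$ on $[0,z)$ yields the \emph{superadditivity-type} estimate
\[
\phi(a+b) \le \phi(a) + \phi(b) - \phi(0)
\]
whenever $a, b, a+b \in [0,z)$. To see this, write $a = \tfrac{a}{a+b}(a+b) + \tfrac{b}{a+b}\cdot 0$ and $b = \tfrac{b}{a+b}(a+b) + \tfrac{a}{a+b}\cdot 0$ as convex combinations of $a+b$ and $0$, apply concavity to each, and add the two resulting inequalities; the coefficients of $\phi(a+b)$ and $\phi(0)$ sum to $1$ on each side, and summing gives exactly the displayed estimate. (The degenerate case $a+b=0$ is immediate.)

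With this two-term estimate in hand, the induction is routine. Assuming the claim for $n$, set $b = \sum_{k=1}^{n} a_k$ and $a = a_{n+1}$; since $\sum_{k=1}^{n+1} a_k < z$ we have $a, b, a+b \in [0,z)$, so
\[
\phi\!\left(\sum_{k=1}^{n+1} a_k\right) \le \phi(a_{n+1}) + \phi\!\left(\sum_{k=1}^{n} a_k\right) - \phi(0)
\le \phi(a_{n+1}) + \sum_{k=1}^{n}\phi(a_k) - n\phi(0) + \phi(a_{n+1})\, ,
\]
wait—more carefully, applying the two-term estimate and then the inductive hypothesis $\phi(\sum_{k=1}^n a_k) \le \sum_{k=1}^n \phi(a_k) - (n-1)\phi(0)$ gives
\[
\phi\!\left(\sum_{k=1}^{n+1} a_k\right) \le \sum_{k=1}^{n+1}\phi(a_k) - n\phi(0),
\]
which is precisely the claim for $n+1$. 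This completes the finite case.

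For the infinite inequality \eqref{Jensen2}, the idea is to let $n \to \infty$ in the finite bound, rewritten as
\[
\phi\!\left(\sum_{k=1}^{n} a_k\right) - (n-1)\phi(0) \le \sum_{k=1}^{n} \phi(a_k).
\]
The main obstacle is controlling the left-hand side, since the term $(n-1)\phi(0)$ can grow without bound; the hypotheses $\phi(0)\ge 0$ and the existence of $z_0$ with $\phi(z_0)\ge 0$ are exactly what is needed to handle this. I expect the argument to run as follows: the partial sums $S_n = \sum_{k=1}^n a_k$ increase to $S_\infty = \sum_{k=1}^\infty a_k < z$. If $\phi(0) = 0$, then the $(n-1)\phi(0)$ term vanishes and one simply uses continuity of the concave $\phi$ at $S_\infty$ to pass to the limit. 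If $\phi(0) > 0$, one must argue that only finitely many $a_k$ can be bounded away from $0$ (indeed $a_k \to 0$), so that $\phi(a_k) \to \phi(0) > 0$; this makes the series $\sum_k \phi(a_k)$ diverge to $+\infty$, rendering \eqref{Jensen2} trivially true. The role of $\phi(z_0) \ge 0$ is to ensure $\phi$ does not dip to $-\infty$ in a way that would spoil this dichotomy, guaranteeing the right-hand side is meaningfully bounded below. Thus the delicate point is a careful case split on whether $\phi(0)$ is zero or positive, combined with the convergence $a_k \to 0$ forced by $\sum a_k < \infty$.
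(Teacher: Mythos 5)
Your finite-$n$ argument is correct and is essentially the paper's proof in iterated form: the two-term estimate $\phi(a+b)\le\phi(a)+\phi(b)-\phi(0)$ is exactly the case $n=2$ of the claimed inequality, and the paper obtains the general case in one stroke by writing each $a_k$ as the convex combination $\left(1-\tfrac{a_k}{A}\right)\cdot 0+\tfrac{a_k}{A}\cdot A$ with $A=\sum_{k=1}^n a_k$ and summing over $k$. Your induction buys nothing and loses nothing here; both are fine.

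The infinite case is where your proposal has a genuine gap. First, your case distinction on $\phi(0)$ is unnecessary, and the branch $\phi(0)>0$ rests on the claim $\phi(a_k)\to\phi(0)$, which requires continuity of $\phi$ at $0$; a concave function on the half-open interval $[0,z)$ need not be continuous at the left endpoint (e.g.\ $\phi(0)=0$, $\phi(x)=1$ for $x>0$ is concave). The conclusion you want there is salvageable --- concavity together with $\phi(0)>0$ and $\phi(z_0)\ge 0$ gives $\phi(x)\ge\frac{z_0-x}{z_0}\phi(0)>0$ on $[0,z_0)$, so $\liminf_k\phi(a_k)\ge\phi(0)>0$ and the series diverges --- but that is not the argument you wrote. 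Second, and more importantly, in the branch $\phi(0)=0$ the passage to the limit is not just ``continuity at $S_\infty$'': you must also justify that $\lim_{n}\sum_{k=1}^{n}\phi(a_k)$ exists in $(-\infty,+\infty]$ and that the partial sums are eventually dominated by the total sum, since individual terms $\phi(a_k)$ may be negative. This is precisely where the hypotheses are used, and uniformly in both cases: since $\sum_k a_k<\infty$ forces $a_k<z_0$ for $k\ge n_0$, and since $\phi(0)\ge0$, $\phi(z_0)\ge0$ and concavity give $\phi\ge0$ on $[0,z_0)$, one has $\phi(a_k)\ge0$ for $k\ge n_0$; hence for every $n\ge n_0$,
\begin{equation*}
\phi\left(\sum_{k=1}^{n}a_k\right)\le\sum_{k=1}^{n}\phi(a_k)\le\sum_{k=1}^{\infty}\phi(a_k),
\end{equation*}
and letting $n\to\infty$ and using continuity of the concave $\phi$ at the interior point $S_\infty$ finishes the proof. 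Your remark about $\phi(z_0)\ge0$ preventing $\phi$ from ``dipping to $-\infty$'' points in this direction but misstates the issue ($\phi$ is real-valued throughout); the actual role of $\phi(z_0)\ge0$ is to make the tail terms nonnegative so that the finite partial sums sit below the infinite sum. As written, your infinite-case argument is an unfinished sketch with one incorrect step.
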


\begin{proof}
W.l.o.g. we may assume that not all the $a_k$ are equal to 0.
Let $A:=\sum_{k=1}^n a_k$.
Since $\phi$ is concave we have
\[
\left(1-\frac{a_k}{A}\right)\phi(0)+\frac{a_k}{A}\phi(A)\le
\phi\left(\left(1-\frac{a_k}{A}\right)\cdot 0+\frac{a_k}{A}A\right)=\phi(a_k)\,.
\]
Summing over $k\in \{1,2,\ldots,n\}$ gives
\begin{equation}\label{jensen_gen}
(n-1)\phi(0)+\phi(A)
\le\sum_{k=1}^n \phi(a_k)\,.
\end{equation}
If $\phi(0) \ge 0$, inequality \eqref{jensen_gen} remains valid without the term $(n-1) \phi(0)$. Since 
$\sum_{k=1}^\infty a_k < \infty$, there exists an $n_0$ such that $0 \le a_k <
z_0$ for all $k \ge n_0$. Since $\phi$ is concave $\phi(0)\ge 0$, and
$\phi(z_0)\ge0$, we have that $\phi(x)\ge 0$ on $[0,z_0)$. Thus $\phi(a_k) \ge
0$ for all $k \ge n_0$. Therefore we have, for all $n \ge n_0$, 
\begin{equation*}
\phi\left( \sum_{k=1}^n a_k \right) \le \sum_{k=1}^n \phi(a_k) \le \sum_{k=1}^\infty \phi(a_k)\,.
\end{equation*}
Letting $n \to \infty$ yields the result.
(Note that concave functions on open intervals are continuous.)
\end{proof}

In order to be able to apply Lemma~\ref{lemjensen}, we now proceed by constructing a mapping that is concave and shares some other useful properties 
with the mapping
$z\mapsto z^{\lambda}$.

Let $\lambda$ be a fixed number in $(1/\mu,1]$. We define a function $\psi: (0,1] \To\RR$ by
\begin{equation}\label{map_psi}
\psi (z):= z (\log (1/z))^{\mu(1-\lambda)}.
\end{equation}
Furthermore, we define a function $\varphi:[0,\infty)\To\RR$ by
$$\varphi (z):=\begin{cases}
             0 &\mbox{if $z=0$,}\\
	     \psi (z) &\mbox{if $z\in (0,{\rm e}^{-2\mu}]$,}\\
	     (z-{\rm e}^{-2\mu})\psi'({\rm e}^{-2\mu})+\psi ({\rm e}^{-2\mu}) & \mbox{if $z>{\rm e}^{-2\mu}$.}
            \end{cases}
 $$
We summarize some properties of the function $\varphi$ in the following lemma.
\begin{lemma}\label{lemphi}
Let $\varphi:[0,\infty) \To \RR$ be defined as above. Then the following statements hold:
\begin{enumerate}
\renewcommand{\theenumi}{{\it (\alph{enumi})}}
\renewcommand{\labelenumi}{\theenumi}
\item \label{lemphi_inc}$\varphi$ is strictly increasing;
\item \label{lemphi_con}$\varphi$ is concave;
\item \label{lemphi_ge1}$\varphi(1)\ge 1$;
\item \label{lemphi_mul}if $x,y\in (0,{\rm e}^{-2\mu}]$, then $\varphi (xy)\le \varphi (x)\varphi  (y)$;
\item \label{lemphi_inv}
for $y\in (0, {\rm e}^{-2\mu} (2\mu)^{\mu (1-\lambda)}]$ it is true that
\begin{equation*}
\varphi^{-1} (y) \le \frac{y}{(\log (1/y))^{\mu (1-\lambda)}}\,.
\end{equation*}
\end{enumerate}
\end{lemma}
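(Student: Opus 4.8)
The plan is to reduce all five statements to elementary estimates on $\psi$ and its first two derivatives, exploiting the single fact that $\log(1/z)\ge 2\mu$ on $(0,\mathrm{e}^{-2\mu}]$. Throughout I write $\nu:=\mu(1-\lambda)$; since $\lambda\in(1/\mu,1]$ this gives $\nu\in[0,\mu-1)$, a range I will use repeatedly (and which is strictly stronger than the naive $\nu<\mu$). A direct differentiation of $\psi(z)=z(\log(1/z))^{\nu}$ yields
\[
\psi'(z)=(\log(1/z))^{\nu-1}\big(\log(1/z)-\nu\big),\qquad
\psi''(z)=-\frac{\nu}{z}(\log(1/z))^{\nu-2}\big(\log(1/z)-\nu+1\big),
\]
and I would record at once that $\psi(\mathrm{e}^{-2\mu})=\mathrm{e}^{-2\mu}(2\mu)^{\nu}$ and $\psi'(\mathrm{e}^{-2\mu})=(2\mu)^{\nu-1}(2\mu-\nu)>0$.

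For parts (a) and (b) I argue piecewise. On $(0,\mathrm{e}^{-2\mu}]$ the bound $\log(1/z)\ge 2\mu>\nu$ makes $\psi'>0$, while $\log(1/z)-\nu+1\ge\mu+2>0$ makes $\psi''\le 0$. By construction the third branch is the tangent line to $\psi$ at $\mathrm{e}^{-2\mu}$, so the two branches agree there in value and derivative; hence $\varphi$ is $C^{1}$ with non-increasing derivative, i.e.\ strictly increasing and concave, on $(0,\infty)$. Since $z(\log(1/z))^{\nu}\to 0$ as $z\to 0^{+}$, $\varphi$ is continuous at $0$ with $\varphi(0)=0$, and both properties extend to $[0,\infty)$.

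For part (c) I evaluate $\varphi(1)$ from the affine branch (as $1>\mathrm{e}^{-2\mu}$) and simplify it to $\varphi(1)=g(\nu)$, where $g(\nu):=(2\mu)^{\nu-1}\big(2\mu-\nu(1-\mathrm{e}^{-2\mu})\big)$. Since $g(0)=1$, it suffices to show $g$ is nondecreasing on $[0,\mu-1)$: differentiating, $(\log g)'(\nu)=\log(2\mu)-\frac{1-\mathrm{e}^{-2\mu}}{2\mu-\nu(1-\mathrm{e}^{-2\mu})}$, and the constraint $\nu<\mu-1$ forces the denominator to exceed $\mu+1$, so this is $>\log(2\mu)-\frac{1}{\mu+1}>0$, the last step because $\log(2\mu)>\log 2>\tfrac12>\frac{1}{\mu+1}$ for $\mu>1$. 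For part (d) I note $xy\le\mathrm{e}^{-4\mu}\le\mathrm{e}^{-2\mu}$, so all three terms are values of $\psi$; cancelling the common factor $xy$ reduces the claim to $(a+b)^{\nu}\le(ab)^{\nu}$ with $a:=\log(1/x),\,b:=\log(1/y)\ge 2\mu$, and since $t\mapsto t^{\nu}$ is nondecreasing this is just $a+b\le ab$, which holds because $(a-1)(b-1)\ge(2\mu-1)^{2}>1$.

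For part (e) the range $(0,\mathrm{e}^{-2\mu}(2\mu)^{\nu}]$ is exactly $\psi\big((0,\mathrm{e}^{-2\mu}]\big)$, so there $\varphi^{-1}=\psi^{-1}$; writing $z:=\varphi^{-1}(y)$, i.e.\ $y=z(\log(1/z))^{\nu}$, and cancelling $z$ reduces the desired bound to $(\log(1/y))^{\nu}\le(\log(1/z))^{\nu}$. Since $z\le\mathrm{e}^{-2\mu}$ gives $\log(1/z)\ge 2\mu\ge 1$, we get $y=\psi(z)\ge z$, hence $\log(1/y)\le\log(1/z)$, and the inequality follows. I expect part (c) to be the main obstacle, since the sign of $(\log g)'$ genuinely needs the sharper bound $\nu<\mu-1$ (it fails under $\nu<\mu$ alone) together with $\mu>1$; a secondary point of care is in part (e), where one should verify $\log(1/y)\ge 0$ so that $t\mapsto t^{\nu}$ is applied on $[0,\infty)$.
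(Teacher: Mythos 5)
Your proof is correct and follows essentially the same route as the paper's: explicit first and second derivatives of $\psi$ for (a)--(b), monotonicity of $\varphi(1)$ in the parameter (your $g(\nu)$ increasing from $g(0)=1$ versus the paper's $f(\lambda)$ decreasing to $f(1)=1$) for (c), the reduction to $a+b\le ab$ for $a,b\ge 2\mu\ge 2$ in (d), and the substitution $z=\varphi^{-1}(y)$ with $\log(1/y)\le\log(1/z)$ in (e). The caveat you flag in (e), that one needs $\log(1/y)\ge 0$ before applying $t\mapsto t^{\mu(1-\lambda)}$, is equally implicit in the paper's own proof, so it does not distinguish your argument from theirs.
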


\begin{proof}
Regarding \ref{lemphi_inc}, it is easily checked that $\lim_{z\To 0} \psi (z)=0$ and that $\psi'$ is positive on $(0,{\rm e}^{-2\mu}]$. Furthermore,
$\psi'({\rm e}^{-2\mu})>0$, which implies that $\varphi$ is strictly increasing.

Regarding \ref{lemphi_con}, it is easy to see that $\psi$ is concave on $(0,{\rm e}^{-2\mu}]$ by computing the second derivative, so $\varphi$ is concave.

Regarding \ref{lemphi_ge1}, since $\psi ({\rm e}^{-2\mu})={\rm e}^{-2\mu} (2\mu)^{\mu(1-\lambda)}$, and $\psi '({\rm e}^{-2\mu})=\frac{\lambda +1}{2}(2\mu)^{\mu (1-\lambda)} $,
we obtain
\begin{eqnarray*}
 \varphi(1)&=&(1-{\rm e}^{-2\mu})\frac{\lambda +1}{2}(2\mu)^{\mu (1-\lambda)}+{\rm e}^{-2\mu} (2\mu)^{\mu(1-\lambda)}\\
&=&\frac{\lambda +1}{2} (2\mu)^{\mu (1-\lambda)} + {\rm e}^{-2\mu}(2\mu)^{\mu (1-\lambda)}\left(1-\frac{\lambda +1}{2}\right)\\
&\ge &\frac{\lambda +1}{2} (2\mu)^{\mu (1-\lambda)}.
\end{eqnarray*}
Let now $f(\lambda):=\frac{\lambda +1}{2} (2\mu)^{\mu (1-\lambda)}$ for $\lambda\in (1/\mu,1]$. Then we have
$f'(\lambda)=2^{\mu(1-\lambda)-1}\mu^{\mu(1-\lambda)}(1-\mu(1+\lambda)\log (2\mu))\le 0$,
which implies that $f$ is non-increasing on $(1/\mu,1]$.
Hence we conclude that $\varphi (1)\ge f(\lambda)\ge f(1)=1$.

Regarding \ref{lemphi_mul}, assume that $x,y\in (0,{\rm e}^{-2\mu}]$. This, in particular, means that $x,y\in (0,{\rm e}^{-2}]$ and that $\log(1/x),\log(1/y)\ge 2$.
Thus,
\begin{eqnarray*}
\varphi(xy)&=&xy(\log (1/(xy)))^{\mu(1-\lambda)}\\
&=&xy(\log (1/x)+\log (1/y))^{\mu(1-\lambda)}\\
&\le& xy(\log (1/x)\log (1/y))^{\mu(1-\lambda)}\\
&=&\varphi (x)\varphi (y).
\end{eqnarray*}

For \ref{lemphi_inv}, fix $y$ with  $0<y\le {\rm e}^{-2\mu}
(2\mu)^{\mu (1-\lambda)}=\varphi ({\rm e}^{-2\mu})$. As $\varphi$ is
non-decreasing, this implies that $z:=\varphi^{-1} (y)\in (0,{\rm
e}^{-2 \mu}]$.

Since $\varphi (z)=y$, we have
$$y=z (\log (1/z))^{\mu (1-\lambda)}\ \ \mbox{or}\ \ \frac{1}{z}=\frac{(\log (1/z))^{\mu (1-\lambda)}}{y}.$$
Hence,
\begin{eqnarray*}
\log (1/z)&=&\mu (1-\lambda)\log\log (1/z) + \log (1/y)\\
&\ge& \mu (1-\lambda) \log (2\mu) + \log (1/y)\\
&\ge& \log (1/y).
\end{eqnarray*}
This yields
$$\varphi^{-1}(y)=z=\frac{y}{(\log (1/z))^{\mu (1-\lambda)}}\le \frac{y}{(\log (1/y))^{\mu (1-\lambda)}}$$
as claimed.
\end{proof}

We also have the following lemma.
\begin{lemma}\label{lemphias}
 Let $\varphi: [0,\infty)\To\RR$ be defined as above, let $\gamma>0$, and 
$\kappa\ge\max\{\exp(\mathrm{e}^2),\exp(\mathrm{e}^2 \gamma^{1/\mu})\}$. 

Then for all $k \neq 0$ we have
\begin{equation*}
\varphi\left(r_{\mu,\gamma}(k)\right) \le  \gamma \, D_{\mu,\lambda,\kappa} 
\max\{1,\log \gamma^{-1}\}^{\mu(1-\lambda)}  \frac{1}{\abs{k} (\log (\kappa \abs{k}))^{\mu \lambda}},
\end{equation*}
where $D_{\mu,\lambda,\kappa}>0$ is a constant depending only on $\mu,\lambda$, and $\kappa$.
\end{lemma}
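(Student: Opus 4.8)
The plan is to use the hypothesis on $\kappa$ to push $r_{\mu,\gamma}(k)$ into the range where $\varphi$ coincides with the explicit function $\psi$, and then to bound the resulting logarithmic expression term by term. First I would check that $\kappa \ge \exp(\mathrm{e}^2 \gamma^{1/\mu})$ forces $\log(\kappa|k|) \ge \log\kappa \ge \mathrm{e}^2 \gamma^{1/\mu}$, hence $(\log(\kappa|k|))^{-\mu} \le \mathrm{e}^{-2\mu}\gamma^{-1}$ and therefore $r_{\mu,\gamma}(k) \le \mathrm{e}^{-2\mu}$ for every $k \neq 0$ (this is the parenthetical observation made when $r_{\mu,\gamma}$ was defined). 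Consequently $r_{\mu,\gamma}(k) \in (0,\mathrm{e}^{-2\mu}]$, so by the definition of $\varphi$ we have $\varphi(r_{\mu,\gamma}(k)) = r_{\mu,\gamma}(k)\,(\log(1/r_{\mu,\gamma}(k)))^{\mu(1-\lambda)}$.

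Next I would expand the inner logarithm as
\[
\log\frac{1}{r_{\mu,\gamma}(k)} = \log|k| + \log\gamma^{-1} + \mu\log\log(\kappa|k|),
\]
and bound each of the three summands by a multiple of $\max\{1,\log\gamma^{-1}\}\,\log(\kappa|k|)$. Since $\kappa \ge \exp(\mathrm{e}^2)$ gives $\log(\kappa|k|) \ge \mathrm{e}^2 > 1$, the first term obeys $\log|k| \le \log(\kappa|k|)$; the second obeys $\log\gamma^{-1} \le \max\{1,\log\gamma^{-1}\} \le \max\{1,\log\gamma^{-1}\}\log(\kappa|k|)$; and for the third I would invoke the elementary inequality $\log t \le t$ with $t = \log(\kappa|k|)$ to obtain $\mu\log\log(\kappa|k|) \le \mu\log(\kappa|k|)$. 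Collecting these and using $\max\{1,\log\gamma^{-1}\} \ge 1$ yields
\[
\log\frac{1}{r_{\mu,\gamma}(k)} \le (2+\mu)\,\max\{1,\log\gamma^{-1}\}\,\log(\kappa|k|).
\]

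Finally I would raise this bound to the power $\mu(1-\lambda) \ge 0$ (monotone, since both sides are positive as $r_{\mu,\gamma}(k) < 1$) and multiply through by $r_{\mu,\gamma}(k) = \gamma|k|^{-1}(\log(\kappa|k|))^{-\mu}$. The factor $(\log(\kappa|k|))^{\mu(1-\lambda)}$ coming from the bound combines with $(\log(\kappa|k|))^{-\mu}$ into exactly $(\log(\kappa|k|))^{-\mu\lambda}$, producing the claimed inequality with $D_{\mu,\lambda,\kappa} = (2+\mu)^{\mu(1-\lambda)}$.

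The argument is essentially a routine computation; the only points needing care are the uniform treatment of the sign of $\log\gamma^{-1}$ (handled by $\max\{1,\log\gamma^{-1}\} \ge \log\gamma^{-1}$, valid whether or not $\gamma > 1$) and the reduction of the iterated logarithm via $\log t \le t$, which is precisely what turns the $\mu\log\log(\kappa|k|)$ term into a harmless multiple of $\log(\kappa|k|)$ and thereby explains the appearance of the factor $\max\{1,\log\gamma^{-1}\}^{\mu(1-\lambda)}$. The degenerate case $\lambda = 1$ is automatically covered: there $\mu(1-\lambda) = 0$, both the power and the $\max$-factor reduce to $1$, and the inequality becomes $r_{\mu,\gamma}(k) \le D_{\mu,\lambda,\kappa}\,r_{\mu,\gamma}(k)$ with $D_{\mu,\lambda,\kappa} = 1$.
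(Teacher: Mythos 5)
Your proposal is correct and follows essentially the same route as the paper: verify via the hypothesis on $\kappa$ that $r_{\mu,\gamma}(k)\le \mathrm{e}^{-2\mu}$ so that $\varphi$ acts as $\psi$, expand $\log(1/r_{\mu,\gamma}(k))$ into the three summands, and dominate each by a multiple of $\max\{1,\log\gamma^{-1}\}\log(\kappa|k|)$ using $\log(\kappa|k|)\ge 1$ and $\log t\le t$. The only difference is bookkeeping: you obtain the constant $(2+\mu)^{\mu(1-\lambda)}$ where the paper groups the terms slightly differently and gets $(3\mu)^{\mu(1-\lambda)}$.
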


\begin{proof}
Note that by the choice of $\kappa$ we have $r_{\mu,\gamma} (k)\le {\rm e}^{-2\mu}$ and $\log (\kappa \abs{k})\ge 1$ 
for any $k\neq 0$.
We then have
\begin{align*}
\varphi (r_{\mu,\gamma}(k))&=\varphi\left(\frac{\gamma}{\abs{k} (\log (\kappa \abs{k}))^\mu}\right)\\
&=\frac{\gamma}{\abs{k} (\log (\kappa \abs{k}))^\mu}\left(\log ( \gamma^{-1} \abs{k} (\log (\kappa \abs{k}))^\mu)\right)^{\mu (1-\lambda)} \\
&=\frac{\gamma}{\abs{k} (\log (\kappa \abs{k}))^\mu}\left(\log \gamma^{-1} + \log \abs{k} + 
\mu \log\log (\kappa \abs{k})\right)^{\mu (1-\lambda)}.
\end{align*}
Furthermore,
\begin{eqnarray*}
 \lefteqn{\log \gamma^{-1} + \log \abs{k} + \mu \log\log (\kappa \abs{k}))}\\
&\le& \max\{1,\log \gamma^{-1}\} + \log \abs{k} + \mu \log\log (\kappa \abs{k})\\
&\le& \mu \max\{1,\log \gamma^{-1}\} \left(1+\log \abs{k} + \log\log (\kappa \abs{k})\right)\\
&\le& \mu \max\{1,\log \gamma^{-1}\} \left(1+\log (\kappa \abs{k}) + \log\log (\kappa \abs{k})\right)\\
&\le& 3 \mu \max\{1,\log \gamma^{-1}\} \log (\kappa \abs{k}),
\end{eqnarray*}
where we used $\log (\kappa \abs{k})\ge 1$. This yields the result.

\end{proof}

Now the following theorem shows how the worst-case error using a lattice rule with a
generating vector constructed by Algorithm~\ref{algcbc} can be bounded.
\begin{theorem}\label{thmcbc}
 Let $\bsg^\ast=(g_1^\ast,\ldots,g_s^\ast)\in\{1,\ldots,N-1\}^s$ be a vector constructed by Algorithm~\ref{algcbc}. Let 
$\bsgamma = (\gamma_j)_{j \in \mathbb{N}}$ be a sequence of positive real numbers and assume that 
$\kappa \ge \sup_{j \in \mathbb{N}} \max\{\exp(\mathrm{e}^2),\exp(\mathrm{e}^2 \gamma_j^{1/\mu})\}$. Then for every $d\in \{1, \ldots, s\}$ and any $1/\mu < \lambda \le 1$ 
it is true that
\begin{align}\label{eqcbc}
e^2 (g_1^\ast,\ldots,g_d^\ast)\le &  
\frac{T_d (\mu,\lambda,\kappa,\bsgamma)}{N \left(\log (N/T_d (\mu,\lambda,\kappa,\bsgamma))\right)^{\mu(1-\lambda)}}
\end{align}
for any $N\ge  \mathrm{e}^{2\mu} (2\mu)^{\mu (\lambda-1)}T_d (\mu,\lambda,\kappa,\bsgamma)$, where
\begin{equation*}
T_d(\mu, \lambda,\kappa,\bsgamma) =  3 \prod_{j=1}^d 
\left(1+2C_{\mu,\lambda,\kappa}\ \gamma_{j} \max\{1,\log \gamma_{j}^{-1}\}^{\mu(1-\lambda)}\right), 
\end{equation*}
with a constant $C_{\mu,\lambda,\kappa}$ only depending on $\mu$, $\lambda$ and $\kappa$.
\end{theorem}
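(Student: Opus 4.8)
The plan is to run the standard component-by-component induction on the dimension $d$, but with the classical Jensen step $(\sum_k a_k)^\lambda\le\sum_k a_k^\lambda$ replaced throughout by the concave function $\varphi$ (Lemma~\ref{lemphi}). Write $\theta_d:=e^2(g_1^\ast,\ldots,g_d^\ast)$ for brevity, so that by \eqref{wce} and \eqref{duallat} we have $\theta_d=\sum_{\bszero\neq\bsk\in L^\perp}r_{\mu,\bsgamma}(\bsk)$. I would prove by induction on $d$ the claim $\varphi(\theta_d)\le T_d(\mu,\lambda,\kappa,\bsgamma)/N$. Applying the increasing inverse $\varphi^{-1}$ together with the estimate of Lemma~\ref{lemphi}\ref{lemphi_inv} at $y=T_d/N$ then produces $\theta_d\le \frac{T_d/N}{(\log(N/T_d))^{\mu(1-\lambda)}}$, which is exactly \eqref{eqcbc}; here the hypothesis $N\ge \mathrm{e}^{2\mu}(2\mu)^{\mu(\lambda-1)}T_d$ is precisely the condition $y\le \mathrm{e}^{-2\mu}(2\mu)^{\mu(1-\lambda)}$ under which that estimate is valid.

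For the inductive step I isolate the last coordinate. Splitting the $d$-dimensional dual-lattice sum according to whether $k_d=0$ gives $e^2(g_1^\ast,\ldots,g_{d-1}^\ast,g)=\theta_{d-1}+\Theta(g)$, where $\Theta(g)$ collects the terms with $k_d\neq0$. Since $g_d^\ast$ minimizes the error and $\varphi$ is increasing (Lemma~\ref{lemphi}\ref{lemphi_inc}), the pointwise-in-$g$ bound $\varphi(\theta_d)\le\varphi(\theta_{d-1}+\Theta(g))$ holds for every $g$; as $\varphi$ is concave with $\varphi(0)=0$ it is subadditive (the $n=2$ case of Lemma~\ref{lemjensen}), so $\varphi(\theta_{d-1}+\Theta(g))\le\varphi(\theta_{d-1})+\varphi(\Theta(g))$, and averaging over $g\in\{1,\ldots,N-1\}$ yields
\[
\varphi(\theta_d)\le\varphi(\theta_{d-1})+\frac{1}{N-1}\sum_{g=1}^{N-1}\varphi(\Theta(g)).
\]
To the increment I would apply the infinite-sum form \eqref{Jensen2} of Lemma~\ref{lemjensen} to pull $\varphi$ inside the sum over $(\bsk,k_d)$, then the submultiplicativity of Lemma~\ref{lemphi}\ref{lemphi_mul} (legitimate because the choice of $\kappa$ forces every nonzero weight into $(0,\mathrm{e}^{-2\mu}]$), and finally Lemma~\ref{lemphias} to replace each factor $\varphi(r_{\mu,\gamma_j}(k_j))$ by $\gamma_j D_{\mu,\lambda,\kappa}\max\{1,\log\gamma_j^{-1}\}^{\mu(1-\lambda)}|k_j|^{-1}(\log(\kappa|k_j|))^{-\mu\lambda}$. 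Since $\mu\lambda>1$ the one-dimensional series $\sum_{k\neq0}|k|^{-1}(\log(\kappa|k|))^{-\mu\lambda}$ converges, and absorbing it into the constant produces exactly the factor $2C_{\mu,\lambda,\kappa}\gamma_j\max\{1,\log\gamma_j^{-1}\}^{\mu(1-\lambda)}$ appearing in $T_d$; the unconstrained $\bsk$-sum then factorizes into $\prod_{j<d}(1+2C_{\mu,\lambda,\kappa}\gamma_j\max\{1,\log\gamma_j^{-1}\}^{\mu(1-\lambda)})$.

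The extraction of the $1/N$ factor is where I expect the real work to sit, and it is the main obstacle. For a fixed $\bsk$ and a fixed $k_d\not\equiv0\pmod N$ the congruence $\bsk\cdot(g_1^\ast,\ldots,g_{d-1}^\ast)+k_dg\equiv0\pmod N$ has a unique solution $g$ modulo the prime $N$, so averaging its indicator over $g$ costs only $1/(N-1)\le 2/N$; the troublesome contribution comes from $k_d\equiv0\pmod N$ with $k_d\neq0$, where the congruence no longer constrains $g$. For those terms I would write $k_d=Nm$ and use $|k_d|^{-1}(\log(\kappa|k_d|))^{-\mu\lambda}\le N^{-1}|m|^{-1}(\log(\kappa|m|))^{-\mu\lambda}$ to recover a gain of $1/N$ directly from the reduced weight, bounding the accompanying constrained $\bsk$-sum by the full one. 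The base case $d=1$ with $g_1^\ast=1$ is the same computation restricted to $N\mid k$.

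Combining the two cases, the increment is at most $\tfrac{3}{N}(P_d-P_{d-1})$, where $P_d:=\prod_{j=1}^d(1+2C_{\mu,\lambda,\kappa}\gamma_j\max\{1,\log\gamma_j^{-1}\}^{\mu(1-\lambda)})$, and telescoping from the base case gives $\varphi(\theta_d)\le 3P_d/N=T_d/N$, closing the induction. The constant $3$ in $T_d$ and the inner factor $2$ in its product are exactly what is needed to make these estimates match, so that the remaining care is purely in tracking constants; no new idea beyond Lemmas~\ref{lemjensen}--\ref{lemphias} is required. Because the argument uses only $\varphi$ and the dual-lattice representation \eqref{wce}, it applies verbatim to the $\log$-cosine space under tent-transformed lattice rules.
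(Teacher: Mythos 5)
Your proposal is correct and follows essentially the same route as the paper's own proof: the induction on $\varphi(e^2(g_1^\ast,\ldots,g_d^\ast))\le T_d/N$, the averaging-over-$g$ step combined with the subadditivity/concavity of $\varphi$ from Lemma~\ref{lemjensen}, the submultiplicativity and the weight bound of Lemma~\ref{lemphias}, the split of the last-coordinate congruence into $k\equiv 0$ and $k\not\equiv 0\pmod N$ with the unique-solution and $k=Nm$ arguments, and the final application of Lemma~\ref{lemphi}\ref{lemphi_inv}. The only differences are cosmetic (indexing the step as $d-1\to d$ rather than $d\to d+1$, and phrasing the closing of the induction as a telescoping of $P_d-P_{d-1}$).
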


\begin{proof}
Using induction on $d$ we first show that
\begin{equation}\label{e_bound}
\varphi(e^2 (g_1^\ast, \ldots, g_{d}^\ast))\le\frac{T_d (\mu,\lambda,\kappa,\bsgamma)}{N}.
\end{equation}
The result then follows from Lemma~\ref{lemphi}\ref{lemphi_inv}.

Note that if $\kappa \ge \sup_{j \in \mathbb{N}} \max\{\exp(\mathrm{e}^2),\exp(\mathrm{e}^2 \gamma_j^{1/\mu})\}$, 
then $r_{\mu, \gamma_j}(k_j) \le \mathrm{e}^{-2\mu}$ if $k_j\neq 0$. We can then use \ref{lemphi_ge1} and \ref{lemphi_mul} of Lemma~\ref{lemphi} to obtain
\begin{equation*}
\varphi(r_{\mu, \bsgamma}(\bsk) ) = \varphi \left( \prod_{\substack{ j=1 \\ k_j \neq 0 }}^s r_{\mu, \gamma_j}(k_j) \right) 
\le \prod_{\substack{j=1 \\ k_j \neq 0}}^s \varphi(r_{\mu, \gamma_j}(k_j)) \le \prod_{j=1}^s \varphi(r_{\mu, \gamma_j}(k_j)).
\end{equation*}

 For $d=1$, we have $g^\ast_1=1$. Using Lemma~\ref{lemjensen} and Lemma~\ref{lemphi}\ref{lemphi_inc}, we obtain
\begin{eqnarray*}
 \varphi(e^2 (1))&=&\varphi \left(\sum_{\substack{k\in\ZZ\setminus\{0\}\\ k\equiv 0 \pmod{N}}} r_{\mu,\gamma_1} (k)\right)\\
&\le & \sum_{\substack{k\in\ZZ\setminus\{0\}\\ k\equiv 0 \pmod{N}}}\varphi(r_{\mu,\gamma_1} (k))\\
&\le & 2 D_{\mu,\lambda,\kappa} \gamma_1 \max\{1,\log \gamma_1^{-1}\}^{\mu(1-\lambda)} 
\sum_{\substack{k=1\\ k\equiv 0 \pmod{N}}}^\infty \frac{1}{k(\log (\kappa k))^{\mu\lambda}}\\
&= &  2 D_{\mu,\lambda,\kappa} \gamma_1 \max\{1,\log\gamma_1^{-1}\}^{\mu(1-\lambda)} \sum_{k=1}^\infty \frac{1}{kN(\log (\kappa kN))^{\mu\lambda}}\\
&\le&\frac{2}{N} D_{\mu,\lambda,\kappa} \gamma_1 \max\{1,\log \gamma_1^{-1}\}^{\mu(1-\lambda)} \sum_{k=1}^\infty \frac{1}{k(\log (\kappa k))^{\mu\lambda}}\\
&=&\frac{2}{N} C_{\mu,\lambda,\kappa} \gamma_1 \max\{1,\log \gamma_1^{-1}\}^{\mu(1-\lambda)},
\end{eqnarray*}
where $C_{\mu,\lambda,\kappa} = D_{\mu, \lambda, \kappa} \sum_{k=1}^\infty \frac{1}{k (\log (\kappa k))^{\mu \lambda}} < \infty$ and where $D_{\mu, \lambda, \kappa}$ is defined as in Lemma~\ref{lemphias}. This implies the result for $d=1$.

Suppose now the result has already been shown for dimension $d<s$, i.e., there exists a vector $\bsg_d^\ast=(g_1^\ast,\ldots,g_d^\ast)$ satisfying~\eqref{eqcbc}.
As the algorithm chooses $g_{d+1}^\ast$ to minimize $e^2 (\bsg_d^\ast,g)$, we clearly have $e^2 (\bsg_d^\ast,g_{d+1}^\ast) \le e^2 (\bsg_d^\ast,g)$ for all $g \in \{1,2,\ldots,N-1\}$. From the monotonicity of $\varphi$, see Lemma~\ref{lemphi}\ref{lemphi_inc}, we obtain 
$$
\varphi(e^2 (\bsg_d^\ast,g_{d+1}^\ast)) \le \frac{1}{N-1}\sum_{g\in\{1,\ldots,N-1\}} \varphi(e^2 (\bsg_d^\ast,g))\,.
$$
We therefore have
\begin{eqnarray*}
\lefteqn{ \varphi(e^2 (\bsg_d^\ast,g_{d+1}^\ast))}\\
&\le& \frac{1}{N-1}\sum_{g\in\{1,\ldots,N-1\}}
\varphi\left( \sum_{\substack{(\bsk_d,k)\in\ZZ^{d+1}\setminus\{\bszero\}\\ (\bsg_d^\ast,g)\cdot(\bsk_d,k)\equiv 0\pmod{N}}}\hspace{-1em}r_{\mu,\bsgamma}(\bsk_d,k)\right)\\
&=&\frac{1}{N-1}\sum_{g\in\{1,\ldots,N-1\}}\varphi\left(\sum_{\substack{\bsk_d\in\ZZ^{d}\setminus\{\bszero\}\\ \bsg_d^\ast\cdot\bsk_d\equiv 0\pmod{N}}}\hspace{-1em}r_{\mu,\bsgamma}(\bsk_d)
+\sum_{\bsk_d\in\ZZ^d}\sum_{\substack{k\in\ZZ\setminus\{0\}\\ (\bsg_d^\ast,g)\cdot(\bsk_d,k)\equiv 0\pmod{N} }}\hspace{-2em}r_{\mu,\bsgamma}(\bsk_d,k)\right)\\
&\le&\frac{1}{N-1}\!\sum_{g\in\{1,\ldots,N-1\}}\!\left(\varphi\left(\sum_{\substack{\bsk_d\in\ZZ^{d}\setminus\{\bszero\}\\ \bsg_d^\ast\cdot\bsk_d\equiv 0\pmod{N}}}\hspace{-1em}r_{\mu,\bsgamma}(\bsk_d)\right)
+\varphi\left(\!\sum_{\bsk_d\in\ZZ^d}\!\!\sum_{\substack{k\in\ZZ\setminus\{0\}\\ (\bsg_d^\ast,g)\cdot(\bsk_d,k)\equiv 0\pmod{N} }}\hspace{-2em}r_{\mu,\bsgamma}(\bsk_d,k)\right)\!\!\right),\\
\end{eqnarray*}
where we used Lemma~\ref{lemjensen} to obtain the last inequality. Hence we see that
\begin{equation}\label{eqstep}
\varphi(e^2 (\bsg_d^\ast,g_{d+1}^\ast))\le \varphi (e^2 (\bsg_d^\ast)) + \theta_d,
\end{equation}
where
$$\theta_d:=\frac{1}{N-1}\sum_{g\in\{1,\ldots,N-1\}}
\varphi\left(\sum_{\bsk_d\in\ZZ^d}\sum_{\substack{k\in\ZZ\setminus\{0\}\\ (\bsg_d^\ast,g)\cdot(\bsk_d,k)\equiv 0\pmod{N} }}
r_{\mu,\bsgamma}(\bsk_d,k)\right).$$
Using Lemma~\ref{lemjensen} once again, we get
\begin{eqnarray*}
 \theta_d &\le & \sum_{\bsk_d\in\ZZ^d}\sum_{k\in\ZZ\setminus\{0\}} \frac{1}{N-1}\sum_{\substack{g\in\{1,\ldots,N-1\}\\(\bsg_d^\ast,g)\cdot(\bsk_d,k)\equiv 0\pmod{N}}}
\varphi (r_{\mu,\bsgamma}(\bsk_d,k))\\
&=&\sum_{\bsk_d\in\ZZ^d}\sum_{\substack{k\in\ZZ\setminus\{0\}\\ k\equiv 0\pmod{N} }} \frac{1}{N-1}\sum_{\substack{g\in\{1,\ldots,N-1\}\\\bsg_d^\ast\cdot\bsk_d\equiv 0\pmod{N}}}
\varphi (r_{\mu,\bsgamma}(\bsk_d,k))\\
&&+\sum_{\bsk_d\in\ZZ^d}\sum_{\substack{k\in\ZZ\setminus\{0\}\\ k\not\equiv 0\pmod{N} }} \frac{1}{N-1}\sum_{\substack{g\in\{1,\ldots,N-1\}\\(\bsg_d^\ast,g)\cdot(\bsk_d,k)\equiv 0\pmod{N}}}
\varphi (r_{\mu,\bsgamma}(\bsk_d,k))\\
& \le &\sum_{\bsk_d\in\ZZ^d}\sum_{\substack{k\in\ZZ\setminus\{0\}\\ k\equiv 0\pmod{N} }}  \varphi (r_{\mu,\bsgamma}(\bsk_d,k))
+\frac{1}{N-1}\sum_{\bsk_d\in\ZZ^d}\sum_{\substack{k\in\ZZ\setminus\{0\}\\ k\not\equiv 0\pmod{N} }}  \varphi (r_{\mu,\bsgamma}(\bsk_d,k)),
\end{eqnarray*}
where we used that there is at most one solution $g\in\{1,\ldots,N-1\}$ to the congruence $(\bsg_d^\ast,g)\cdot(\bsk_d,k)\equiv 0\pmod{N}$.

Let now
$$\Sigma_1:= \sum_{\bsk_d\in\ZZ^d}\sum_{\substack{k\in\ZZ\setminus\{0\}\\ k\equiv 0\pmod{N} }}  \varphi (r_{\mu,\bsgamma}(\bsk_d,k))$$
and
$$\Sigma_2:=\frac{1}{N-1}\sum_{\bsk_d\in\ZZ^d}\sum_{\substack{k\in\ZZ\setminus\{0\}\\ k\not\equiv 0\pmod{N} }}  \varphi (r_{\mu,\bsgamma}(\bsk_d,k)).$$ 

Regarding $\Sigma_1$, we have 
\begin{eqnarray*}
 \Sigma_1 &=&\sum_{\substack{k\in\ZZ\setminus\{0\}\\ k\equiv 0\pmod{N} }}  \varphi (r_{\mu,\gamma_{d+1}}(k))+
  \sum_{\bsk_d\in\ZZ^d\setminus\{\bszero\}}\sum_{\substack{k\in\ZZ\setminus\{0\}\\ k\equiv 0\pmod{N} }}  \varphi (r_{\mu,\bsgamma}(\bsk_d,k))\\
&=&\sum_{\substack{k\in\ZZ\setminus\{0\}\\ k\equiv 0\pmod{N} }}  \varphi (r_{\mu,\gamma_{d+1}}(k))\\
&&+ \left(\sum_{\emptyset\neq\uu \subseteq [d]} 2^{|\uu|} \prod_{j\in \uu} \left(\sum_{k=1}^{\infty} \varphi(r_{\mu,\gamma_j}(k))\right)\right)
\sum_{\substack{k\in\ZZ\setminus\{0\}\\ k\equiv 0\pmod{N} }}  \varphi (r_{\mu,\gamma_{d+1}}(k))\\
&=&\underbrace{\sum_{\substack{k\in\ZZ\setminus\{0\}\\ k\equiv 0\pmod{N} }}  \varphi (r_{\mu,\gamma_{d+1}}(k))}_{=:\Sigma_{1,1}}
\underbrace{\left(1+\sum_{\emptyset\neq\uu \subseteq [d]} 2^{|\uu|} \prod_{j\in \uu} \left(\sum_{k=1}^{\infty} \varphi(r_{\mu,\gamma_j}(k))\right)\right)}_{=:\Sigma_{1,2}}.
\end{eqnarray*}
In exactly the same way as for $d=1$, we see that
$$\Sigma_{1,1}\le \frac{2}{N}C_{\mu,\lambda,\kappa} \gamma_{d+1} \max\{1,\log \gamma_{d+1}^{-1}\}^{\mu(1-\lambda)}.$$
Moreover, 
\begin{align*}
\Sigma_{1,2} = \prod_{j=1}^d \left(1 + 2 \sum_{k=1}^\infty \varphi (r_{\mu, \gamma_j}(k) ) \right).
\end{align*}
The sum $ \sum_{k=1}^\infty \varphi (r_{\mu, \gamma_j}(k) )$ can be estimated similarly to the case for $d=1$ by removing the assumption $k\equiv 0 \pmod{N}$. Thus
$\Sigma_{1,2}\le \tfrac{1}{3} T_d (\mu,\lambda,\kappa,\bsgamma)$ and therefore
$$\Sigma_1\le \frac{2}{N}C_{\mu,\lambda,\kappa} \gamma_{d+1} \max\{1,\log \gamma_{d+1}^{-1}\}^{\mu(1-\lambda)} \frac{1}{3} T_d (\mu,\lambda,\kappa,\bsgamma).$$

For $\Sigma_2$, we have, in the same way as above
\begin{eqnarray*}
 \Sigma_2 &\le &\frac{2}{N} \sum_{\substack{k\in\ZZ\setminus\{0\}\\ k\not\equiv 0\pmod{N} }} \varphi (r_{\mu,\gamma_{d+1}}(k))\sum_{\bsk_d\in\ZZ^d}
\varphi (r_{\mu,\bsgamma}(\bsk_d))\\
&\le&\frac{2}{N}\sum_{k\in\ZZ\setminus\{0\}} \varphi (r_{\mu,\gamma_{d+1}}(k))
\left(1+\sum_{\emptyset\neq\uu \subseteq [d]} 2^{|\uu|} \prod_{j\in \uu} \left(\sum_{k=1}^{\infty} \varphi(r_{\mu,\gamma_j}(k))\right)\right)\\
&\le &\frac{2}{N}2 C_{\mu,\lambda,\kappa} \gamma_{d+1} \max\{1,\log \gamma_{d+1}^{-1}\}^{\mu(1-\lambda)} \frac{1}{3} T_d (\mu,\lambda,\kappa,\bsgamma).
\end{eqnarray*}
Summing up, we obtain
$$\theta_d \le \frac{1}{N}2C_{\mu,\lambda,\kappa} \gamma_{d+1} \max\{1,\log \gamma_{d+1}^{-1}\}^{\mu(1-\lambda)} T_d (\mu,\lambda,\kappa,\bsgamma).$$
Using this estimate and the induction hypothesis together with \eqref{eqstep}, yields
$$\varphi(e^2 ((\bsg_d^\ast,g_{d+1}^\ast)))\le\frac{1}{N} T_d (\mu,\lambda,\kappa,\bsgamma)\left(1+2C_{\mu,\lambda,\kappa} \gamma_{d+1} 
\max\{1,\log\gamma_{d+1}^{-1}\}^{\mu(1-\lambda)}\right).$$
The result follows.
\end{proof}

Theorem~\ref{thmcbc} is similar to \cite[Corollary~2]{K03} (where $\beta_j =1$). Our result gives a more refined estimate for the endpoint where $\lambda$ is arbitrarily close to $1/\mu$.

We discuss now a relation between the worst-case error integration errors in the Korobov space and $\log$-Korobov space, respectively. The worst-case error in the Korobov space $H_{K_{\alpha,\bsgamma}}$ depends on $\alpha$ and is defined for $\alpha > 1/2$ only. One can introduce a new figure-of merit by setting $\alpha=1/2$ and truncating the infinite sum in \eqref{Korobov_kernel} to $\bsk \in C(N) = (-N/2, N/2]^s \cap \mathbb{Z}^s$. This figure-of merit is then given by the finite sum
\begin{equation*}
R(\bsg) = \sum_{\bsk \in (L^\perp \cap C(N) ) \setminus \{\bszero\}} w_{1/2,\bsgamma}(\bsk)
\end{equation*}
and can be used to obtain bounds on $e(H_{K_\alpha, \bsgamma}, P)$ for any $\alpha > 1/2$ (see \cite[Theorem~5.5]{N92} or \cite[Lemma~4.20]{LP14} for the 
case where $\gamma_1=\ldots=\gamma_s = 1$). The advantage is that $R$ is independent of $\alpha$ and 
that lattice points $\bsg$ with ``small'' value for $R(\bsg)$ automatically yield a ``small'' worst-case error in   
$H_{K_\alpha, \bsgamma}$ for every $\alpha>1/2$. The worst-case error in the $\log$-Korobov space can be used in a 
similar way as the criterion $R$. Theorem~\ref{thm_ineq} below is similar to a weighted 
version of \cite[Theorem~5.5]{N92} or \cite[Lemma~4.20]{LP14} and the well-known inequality
\begin{equation*}
e^2(H_{K_{\alpha, \bsgamma} }, P) = \sum_{\bsk \in L^\perp\setminus \{\bszero\}} w_{\alpha, \bsgamma}(\bsk) 
\le \left(\sum_{\bsk \in L^\perp\setminus \{\bszero\}} w_{\alpha\lambda, \bsgamma^\lambda}(\bsk) \right)^{1/\lambda} = 
\left(e^2(H_{K_{\alpha \lambda, \bsgamma^\lambda}}, P) \right)^{1/\lambda}.
\end{equation*}
for $1/(2\alpha ) < \lambda \le 1$, where $\bsgamma^\lambda = (\gamma_j^\lambda)_{j \in \mathbb{N}}$, which is a direct consequence of \eqref{Jensen1}.

\begin{theorem}\label{thm_ineq}
For any $\bsg \in \{1, 2, \ldots, N-1\}^s$, $\mu > 1$, $\alpha > 1/2$ and $1/(2 \alpha) < \lambda \le 1$ we have
\begin{equation*}
e(H_{K_{\alpha, \bsgamma}}, P) \le   \left( e(H_{K_{\log, \mu, \bsgamma^\lambda \tau_{\mu, \alpha, \kappa, \lambda}}}, P) \right)^{1/\lambda},
\end{equation*}
where 
\begin{equation}\label{def_tau}
\tau_{\mu, \alpha, \kappa, \lambda} = \max\left\{ (\log \kappa)^\mu, \left( \frac{\mu}{2\lambda \alpha - 1} \right)^\mu \right\}, 
\end{equation}
and where $\bsgamma^\lambda \tau_{\mu, \alpha, \kappa, \lambda} = (\gamma_j^\lambda \tau_{\mu, \alpha, \kappa, \lambda})_{j \in \mathbb{N}}$.
\end{theorem}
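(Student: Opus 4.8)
The plan is to reduce everything to a \emph{pointwise} comparison of the two product weights on the dual lattice, after first using the Jensen-type inequality \eqref{Jensen1} to lower the smoothness of the power weight down to the endpoint. Write $\tau=\tau_{\mu,\alpha,\kappa,\lambda}$ and recall the inequality displayed just before the statement, which gives
\[
e^2(H_{K_{\alpha,\bsgamma}},P)=\sum_{\bsk\in L^\perp\setminus\{\bszero\}} w_{\alpha,\bsgamma}(\bsk)\le\Big(\sum_{\bsk\in L^\perp\setminus\{\bszero\}} w_{\alpha\lambda,\bsgamma^\lambda}(\bsk)\Big)^{1/\lambda}=\big(e^2(H_{K_{\alpha\lambda,\bsgamma^\lambda}},P)\big)^{1/\lambda}.
\]
Since $1/(2\alpha)<\lambda$ we have $\beta:=\alpha\lambda>1/2$, so the right-hand side is (a power of) the worst-case error of a genuine power-Korobov space, and it remains only to dominate its weights by the $\log$-weights $r_{\mu,\bsgamma^\lambda\tau}$.

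The second step is the key estimate: for every $j$ and every $k\neq0$ I claim
\[
w_{\beta,\gamma_j^\lambda}(k)=\gamma_j^\lambda|k|^{-2\beta}\le\gamma_j^\lambda\,\tau\,|k|^{-1}(\log(\kappa|k|))^{-\mu}=r_{\mu,\gamma_j^\lambda\tau}(k).
\]
Cancelling $\gamma_j^\lambda|k|^{-2\beta}$, this is equivalent to $|k|^{-(2\beta-1)}(\log(\kappa|k|))^{\mu}\le\tau$ for all integers $|k|\ge1$, and hence to the single real-variable inequality $\sup_{t\ge1}g(t)\le\tau$ with $g(t):=t^{-(2\beta-1)}(\log(\kappa t))^{\mu}$. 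This is where the particular form of $\tau$ in \eqref{def_tau} is forced, and I expect this optimisation to be the only real work in the proof.

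The maximisation of $g$ is elementary. From $g'(t)/g(t)=t^{-1}\big(-(2\beta-1)+\mu/\log(\kappa t)\big)$ one sees that $g$ increases on $(0,t^\ast)$ and decreases on $(t^\ast,\infty)$, where $t^\ast=\kappa^{-1}\exp(\mu/(2\beta-1))$ is the unique point with $\log(\kappa t^\ast)=\mu/(2\beta-1)$. If $\log\kappa\ge\mu/(2\beta-1)$ then $t^\ast\le1$, so $g$ is decreasing on $[1,\infty)$ and $\sup_{t\ge1}g=g(1)=(\log\kappa)^{\mu}$. If instead $\log\kappa<\mu/(2\beta-1)$ then $t^\ast>1$ and $\sup_{t\ge1}g=g(t^\ast)=\kappa^{2\beta-1}\mathrm{e}^{-\mu}(\mu/(2\beta-1))^{\mu}$; here the prefactor satisfies $\kappa^{2\beta-1}\mathrm{e}^{-\mu}=\exp((2\beta-1)\log\kappa-\mu)\le1$ precisely because $(2\beta-1)\log\kappa<\mu$ in this case, so $\sup_{t\ge1}g\le(\mu/(2\beta-1))^{\mu}$. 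In both cases $\sup_{t\ge1}g\le\max\{(\log\kappa)^{\mu},(\mu/(2\beta-1))^{\mu}\}=\tau$, using $2\beta-1=2\lambda\alpha-1$, which is exactly the claimed bound.

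Finally I would assemble the pieces. Since both weights are products whose factors equal $1$ in the coordinates where $k_j=0$ and are nonnegative throughout, the coordinatewise bound of the second step multiplies up to $w_{\beta,\bsgamma^\lambda}(\bsk)\le r_{\mu,\bsgamma^\lambda\tau}(\bsk)$ for every $\bsk\neq\bszero$. Summing over $\bsk\in L^\perp\setminus\{\bszero\}$ and invoking \eqref{wce} yields $e^2(H_{K_{\alpha\lambda,\bsgamma^\lambda}},P)\le e^2(H_{K_{\log,\mu,\bsgamma^\lambda\tau}},P)$. Combining this with the first display and taking square roots gives
\[
e(H_{K_{\alpha,\bsgamma}},P)\le\big(e^2(H_{K_{\log,\mu,\bsgamma^\lambda\tau}},P)\big)^{1/(2\lambda)}=\big(e(H_{K_{\log,\mu,\bsgamma^\lambda\tau}},P)\big)^{1/\lambda},
\]
which is the assertion. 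The only points needing a word of care are that $\beta=\alpha\lambda>1/2$ guarantees $2\beta-1>0$, so $g$ genuinely decays at infinity and its supremum is finite, and that $\kappa>1$ keeps $\log(\kappa|k|)$ positive throughout.
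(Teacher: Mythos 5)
Your proposal is correct and follows essentially the same route as the paper: Jensen's inequality \eqref{Jensen1} to pass to the weights $w_{\alpha\lambda,\bsgamma^\lambda}$, then the pointwise domination $|k|^{-2\lambda\alpha}\le \tau_{\mu,\alpha,\kappa,\lambda}\,|k|^{-1}(\log(\kappa|k|))^{-\mu}$ summed over the dual lattice. The only cosmetic difference is that you verify the one-variable inequality by explicitly locating the maximum of $t\mapsto t^{-(2\lambda\alpha-1)}(\log(\kappa t))^{\mu}$, whereas the paper compares the value at $t=1$ and the derivatives; both are equally elementary and yield the same constant $\tau_{\mu,\alpha,\kappa,\lambda}$.
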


\begin{proof}
For $1/(2 \alpha ) < \lambda \le 1$ we have
\begin{equation*}
(e^2(H_{K_{\alpha,\bsgamma}}, P))^\lambda =  \left( \sum_{\bsk \in L^\perp \setminus \{\bszero\}} w_{\alpha, \bsgamma}(\bsk) \right)^\lambda \le \sum_{\bsk \in L^\perp\setminus \{\bszero\}} w^\lambda_{\alpha, \bsgamma}(\bsk) = \sum_{\bsk \in L^\perp \setminus \{\bszero\}} \prod_{\substack{j=1 \\ k_j \neq 0}}^s \frac{\gamma_j^\lambda}{|k_j|^{2\lambda \alpha}}.
\end{equation*}
We claim that for any $k_j \in \mathbb{Z} \setminus \{0\}$ we have
\begin{equation*}
\frac{1}{|k_j|^{2\lambda \alpha}} \le \frac{\tau_{\mu, \alpha, \kappa, \lambda}}{|k_j| (\log (\kappa |k_j|))^\mu},
\end{equation*}
where 
\begin{equation*}
\tau_{\mu, \alpha, \kappa, \lambda} =  \max\left\{(\log \kappa)^\mu,  \left(\frac{\mu}{2\lambda \alpha -1 } \right)^\mu \right\}.
\end{equation*}
This amounts to showing that $\log (\kappa x) \le \tau_{\mu, \alpha, \kappa, \lambda}^{1/\mu} x^{\frac{2\lambda \alpha-1}{\mu}}$ for all $x \ge 1$. 
The result clearly holds for $x=1$. By differentiating both sides with respect to $x$, we require that 
$$x^{-1} \le \tau_{\mu, \alpha, \kappa, \lambda}^{1/\mu} \frac{2\lambda \alpha-1}{\mu} x^{\frac{2\lambda \alpha-1}{\mu} - 1}.$$ 
This means that we require 
$$\tau_{\mu, \alpha, \kappa, \lambda} \ge \left(\frac{\mu}{2\lambda \alpha-1}\right)^\mu x^{-(2\lambda \alpha-1)}$$ 
for all $x \ge 1$. Since $2 \lambda \alpha -1 > 0$, the claim follows.

Thus we have
\begin{align*}
(e^2(H_{K_{\alpha, \bsgamma}}, P))^\lambda \le & 
\sum_{\bsk \in L^\perp \setminus \{\bszero\}} \prod_{\substack{j=1 \\ k_j \neq 0}}^s 
\frac{\gamma_j^\lambda \tau_{\mu, \alpha, \kappa, \lambda} }{|k_j| (\log (\kappa |k_j|))^\mu} \\  
= & \sum_{\bsk \in L^\perp \setminus \{\bszero\}} r_{\mu, \bsgamma^\lambda \tau_{\mu, \alpha, \kappa, \lambda}}(\bsk) \\  
= & e^2(H_{K_{\log, \mu, \bsgamma^\lambda \tau_{\mu, \alpha, \kappa, \lambda}}}, P).
\end{align*}
\end{proof}

Combining Theorems~\ref{thmcbc} and \ref{thm_ineq} we obtain the following result.
\begin{corollary}
Let $\mu>1$, let $\bsgamma = (\gamma_j)_{j \in \mathbb{N}}$ be a sequence of positive real numbers and assume that 
$\kappa \ge \sup_{j \in \mathbb{N}} \exp(\mathrm{e}^2 \gamma_j^{1/\mu})$. Let 
$\bsg^\ast$ be constructed using Algorithm~\ref{algcbc} based on the $\log$-Korobov space 
$H_{K_{\log, \mu, \bsgamma}}$ and let $P$ denote the lattice rule with generating vector $\bsg^\ast$. Then for all 
$\alpha > 1/2$, $1/(2\alpha) < \lambda \le 1$ and $1/\mu < \lambda' \le 1$ we have
\begin{equation*}
e(H_{K_{\alpha, \bsgamma^{1/\lambda} / \tau_{\mu, \alpha, \kappa, \lambda}^{1/\lambda}  }}, P) 
\le \left( \frac{T_d (\mu,\lambda',\kappa,\bsgamma)}
{N \left(\log (N/T_d (\mu,\lambda',\kappa,\bsgamma))\right)^{\mu(1-\lambda')}} \right)^{1/(2\lambda )},
\end{equation*}
where $\tau_{\mu, \alpha, \kappa, \lambda}$ is given by \eqref{def_tau}, and where 
$\bsgamma^{1/\lambda} / \tau_{\mu, \alpha, \kappa, \lambda}^{1/\lambda}= 
(\gamma_j^{1/\lambda} / \tau_{\mu, \alpha, \kappa, \lambda}^{1/\lambda})_{j \in \mathbb{N}}$.
\end{corollary}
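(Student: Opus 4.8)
The plan is to combine Theorems~\ref{thmcbc} and~\ref{thm_ineq} directly; the only genuine content is choosing the Korobov weight sequence so that the auxiliary $\log$-Korobov space produced by Theorem~\ref{thm_ineq} coincides exactly with the space on which Algorithm~\ref{algcbc} was run. Fix $\alpha>1/2$ and $1/(2\alpha)<\lambda\le 1$, and abbreviate $\tau:=\tau_{\mu,\alpha,\kappa,\lambda}$. By hypothesis the generating vector $\bsg^\ast$ and the associated lattice rule $P$ are constructed with respect to $H_{K_{\log,\mu,\bsgamma}}$, so Theorem~\ref{thmcbc} controls $e^2(H_{K_{\log,\mu,\bsgamma}},P)$ for this particular weight sequence $\bsgamma$, while Theorem~\ref{thm_ineq} lets us trade a Korobov worst-case error for a $\log$-Korobov one whose weights we are free to engineer.

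First I would apply Theorem~\ref{thm_ineq} not to $\bsgamma$ itself but to the Korobov weights $\boldsymbol{\delta}:=\bsgamma^{1/\lambda}/\tau^{1/\lambda}=(\gamma_j^{1/\lambda}/\tau^{1/\lambda})_{j\in\NN}$, which are precisely the weights on the left-hand side of the Corollary. The point of this choice is the weight identity: the $\log$-Korobov weights produced by Theorem~\ref{thm_ineq} are $\boldsymbol{\delta}^\lambda\tau=(\delta_j^\lambda\tau)_j$, and $\delta_j^\lambda\tau=\gamma_j$, so $\boldsymbol{\delta}^\lambda\tau=\bsgamma$. Hence Theorem~\ref{thm_ineq} gives
\[
e(H_{K_{\alpha,\boldsymbol{\delta}}},P)\le\left(e(H_{K_{\log,\mu,\boldsymbol{\delta}^\lambda\tau}},P)\right)^{1/\lambda}=\left(e(H_{K_{\log,\mu,\bsgamma}},P)\right)^{1/\lambda},
\]
so the intermediate $\log$-Korobov space is exactly the one optimised by the CBC construction.

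Second, since $e(H_{K_{\log,\mu,\bsgamma}},P)=\big(e^2(H_{K_{\log,\mu,\bsgamma}},P)\big)^{1/2}$, the displayed inequality reads $e(H_{K_{\alpha,\boldsymbol{\delta}}},P)\le\big(e^2(H_{K_{\log,\mu,\bsgamma}},P)\big)^{1/(2\lambda)}$. Now I would invoke Theorem~\ref{thmcbc}, taking its free parameter to be $\lambda'$ (any value in $(1/\mu,1]$, chosen independently of $\lambda$), to bound the squared $\log$-Korobov error; since $t\mapsto t^{1/(2\lambda)}$ is increasing, this yields
\[
e(H_{K_{\alpha,\boldsymbol{\delta}}},P)\le\left(e^2(H_{K_{\log,\mu,\bsgamma}},P)\right)^{1/(2\lambda)}\le\left(\frac{T_d(\mu,\lambda',\kappa,\bsgamma)}{N\left(\log(N/T_d(\mu,\lambda',\kappa,\bsgamma))\right)^{\mu(1-\lambda')}}\right)^{1/(2\lambda)},
\]
which is precisely the claimed bound.

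It then remains only to confirm the hypotheses of the two invoked results. Theorem~\ref{thm_ineq} needs $\mu>1$, $\alpha>1/2$ and $1/(2\alpha)<\lambda\le1$, all assumed; Theorem~\ref{thmcbc} needs $1/\mu<\lambda'\le1$, the point-count condition $N\ge\mathrm{e}^{2\mu}(2\mu)^{\mu(\lambda'-1)}T_d(\mu,\lambda',\kappa,\bsgamma)$, and the condition on $\kappa$ relative to the construction weights $\bsgamma$, all of which are in force. I expect no real obstacle here: the difficulty is purely bookkeeping. One must keep the two independent parameters separate --- $\lambda$, which comes from the Jensen step in Theorem~\ref{thm_ineq} and enters only through $\tau$ and the final exponent, versus $\lambda'$, the $\log$-scale parameter of Theorem~\ref{thmcbc} entering only through $T_d$ --- and verify that the identity $\boldsymbol{\delta}^\lambda\tau=\bsgamma$ holds exactly, so that the $\log$-Korobov space bridging the two theorems is the very space on which the CBC algorithm was performed.
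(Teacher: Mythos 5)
Your proof is correct and is exactly the argument the paper intends: the paper states the corollary with only the remark ``Combining Theorems~\ref{thmcbc} and \ref{thm_ineq} we obtain the following result,'' and your choice of Korobov weights $\boldsymbol{\delta}=\bsgamma^{1/\lambda}/\tau^{1/\lambda}$ so that $\boldsymbol{\delta}^\lambda\tau=\bsgamma$, followed by Theorem~\ref{thmcbc} with the independent parameter $\lambda'$, is precisely that combination.
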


\section{Tractability}\label{sec_tract}

We now investigate the dependence of the right-hand side in \eqref{eqcbc} on the dimension. 
In order to get an upper bound on the error of numerical integration in the Korobov space $H_{K_{\alpha,\bsgamma}}$ using
lattice rules,  which is independent of the dimension, we require that
$\sum_{j=1}^\infty \gamma_j^\tau < \infty$ for some $1/(2\alpha) < \tau \le 1$. 
Theorem~\ref{thmcbc} allows us to give a more precise condition for the case where $\tau$ is close to $1/(2\alpha)$. 
%Such a question arises for instance in \cite{KSS12}.

To make our study more precise, we need to recall the concept of tractability. Here, we only give the definitions relevant to our setting, 
for much more detailed information we refer, e.g., to~\cite{NW08, NW10}. Let $e(N,s)$ be the $N$th minimal QMC worst-case error of integration in 
a Hilbert space $H$ given by
\begin{equation*}
e(N,s) = \inf_{P} e(H, P),
\end{equation*}
where the infimum is extended over all $N$-element point sets $P$ in $[0,1]^s$. 
We also define the initial error $e(0,s)$ as the integration error when approximating the integral by $0$, that is,
\begin{equation*}
e(0,s) = \sup_{f \in H, \|f\| \le 1} \left| \int_{[0,1]^s} f(\bsx) \rd \bsx \right|.
\end{equation*}
This is used as a reference value.

We are interested in the dependence of the $N$th minimal worst-case error on the dimension $s$. 
We consider the QMC information complexity, which is defined by
\begin{equation*}
N_{\min}(\varepsilon, s) = \min\{ N \in \mathbb{N}: e(N,s) \le \varepsilon e(0,s)\}.
\end{equation*}
This means that $N_{\min}(\varepsilon, s)$ is the minimal number of points
which are required to reduce the initial error by a factor of $\varepsilon$.
We can now define the following notions of tractability. 

We say that the
integration problem in $H$ is 
\begin{enumerate}
\item weakly QMC tractable, if
\begin{equation*}
\lim_{s + \varepsilon^{-1} \to \infty} \frac{\log N_{\min}(\varepsilon, s)}{s+\varepsilon^{-1}}  = 0;
\end{equation*}

\item polynomially QMC-tractable, if there exist non-negative numbers $c$, $p$ and $q$ such that
\begin{equation}\label{ineq_Nmin}
N_{\min}(\varepsilon, s) \le c s^q \varepsilon^{-p}.
\end{equation}

\item strongly polynomially QMC-tractable, if \eqref{ineq_Nmin} holds with $q = 0$. 
\end{enumerate}

It is well-known and easy to show that the initial error for integration in $H_{K_{\alpha,\bsgamma}}$ 
and in $H_{K_{\log, \mu, \bsgamma}}$ is equal to $1$ in both cases.

The following theorem states necessary and sufficient conditions for the
different  notions of tractability.  It turns out that these conditions are exactly
the same as the standard tractability results for the Korobov space
$H_{K_{\alpha,\bsgamma}}$, which can be found, e.g., in~\cite{NW10}.
\begin{theorem}\label{proptract}
\begin{itemize}
\item A necessary and sufficient condition for strong polynomial tractability of integration in $H_{K_{\alpha,\bsgamma}}$ 
and $H_{K_{\log, \mu, \bsgamma}}$ is
 $$\sum_{j=1}^\infty \gamma_j <\infty.$$
\item A necessary and sufficient condition for polynomial tractability of integration in $H_{K_{\alpha,\bsgamma}}$ 
and $H_{K_{\log, \mu, \bsgamma}}$ is
 $$\limsup_{s\To\infty} \frac{\sum_{j=1}^s \gamma_j}{\log s}<\infty.$$
\item A necessary and sufficient condition for weak tractability of integration in $H_{K_{\alpha,\bsgamma}}$
and $H_{K_{\log, \mu, \bsgamma}}$ is
 $$\lim_{s\To\infty} \frac{\sum_{j=1}^s \gamma_j}{s}=0.$$
\end{itemize}
\end{theorem}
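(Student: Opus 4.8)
The plan is to prove the three tractability equivalences by exploiting the fact that the worst-case error in the $\log$-Korobov space can be sandwiched between worst-case errors in ordinary Korobov spaces, so that the tractability conditions for $H_{K_{\log,\mu,\bsgamma}}$ inherit directly from the well-known conditions for $H_{K_{\alpha,\bsgamma}}$. First I would recall that the standard tractability characterizations for the weighted Korobov space $H_{K_{\alpha,\bsgamma}}$ with product weights are exactly the three sum conditions stated in the theorem; these are classical and available in \cite{NW10}. The task then reduces to transferring each characterization to the $\log$-Korobov setting. The key observation is a pair of comparison inequalities for the weights: for a fixed $\alpha>1/2$ and any $k\neq 0$ one has
\begin{equation*}
c_1(\alpha,\mu,\kappa)\,\gamma_j\,|k|^{-2\alpha}\le r_{\mu,\gamma_j}(k)\quad\text{and}\quad r_{\mu,\gamma_j}(k)\le c_2(\alpha',\mu,\kappa)\,\gamma_j\,|k|^{-2\alpha'}
\end{equation*}
for suitable $1/2<\alpha'<\alpha<\ldots$, since $|k|^{-1}(\log(\kappa|k|))^{-\mu}$ decays faster than any $|k|^{-2\alpha'}$ with $\alpha'$ close to $1/2$ but slower than $|k|^{-2\alpha}$ with $\alpha$ bounded away from $1/2$. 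Summed over the dual lattice, these give two-sided bounds relating $e^2(H_{K_{\log,\mu,\bsgamma}},P)$ to Korobov errors at nearby smoothness parameters, with constants depending only on $\mu,\kappa,\alpha$ but crucially not on the dimension $s$.

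With the comparison in hand, the sufficiency direction is straightforward: Theorem~\ref{thmcbc} already produces lattice rules whose $\log$-Korobov error is controlled by $T_d(\mu,\lambda,\kappa,\bsgamma)$, and I would show that each sum condition on $\bsgamma$ forces the appropriate growth (bounded, logarithmic, or sublinear) of $\sum_{j=1}^s\gamma_j$, which in turn controls $\log T_d$ through the elementary estimate $\log T_d\le\text{const}+2C_{\mu,\lambda,\kappa}\sum_{j=1}^d\gamma_j\max\{1,\log\gamma_j^{-1}\}^{\mu(1-\lambda)}$. Since the logarithmic correction factor $\max\{1,\log\gamma_j^{-1}\}^{\mu(1-\lambda)}$ grows subpolynomially in $\gamma_j^{-1}$, it does not affect the summability threshold: $\sum_j\gamma_j<\infty$ is equivalent to $\sum_j\gamma_j\max\{1,\log\gamma_j^{-1}\}^{\mu(1-\lambda)}<\infty$ whenever $\lambda$ is chosen close enough to $1$, and analogous statements hold for the $\limsup$ and $\lim$ conditions. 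Feeding the resulting bounds on $e^2$ into the definition of $N_{\min}(\varepsilon,s)$ then yields strong polynomial, polynomial, or weak tractability, respectively.

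For the necessity direction I would argue through the lower comparison inequality: since $r_{\mu,\gamma_j}(k)\ge c_1\gamma_j|k|^{-2\alpha}$ for some fixed $\alpha>1/2$, we have $e^2(H_{K_{\log,\mu,\bsgamma}},P)\ge c_1' e^2(H_{K_{\alpha,\bsgamma}},P)$ for every point set $P$, and hence $N_{\min}^{\log}(\varepsilon,s)\ge N_{\min}^{\alpha}(\varepsilon',s)$ for a suitably rescaled $\varepsilon'$. Because the initial errors are both equal to $1$, the tractability of the $\log$-Korobov problem therefore implies tractability of the ordinary Korobov problem at smoothness $\alpha$, and the known necessity results for $H_{K_{\alpha,\bsgamma}}$ then force the stated sum conditions on $\bsgamma$. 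The main obstacle, and the step requiring the most care, is verifying that the logarithmic weight factors genuinely do not shift any of the three summability thresholds; concretely, one must check that for $\lambda$ sufficiently near $1$ the slowly varying factor $\max\{1,\log\gamma_j^{-1}\}^{\mu(1-\lambda)}$ can be absorbed, using that for any $\e>0$ one has $\log\gamma_j^{-1}=o(\gamma_j^{-\e})$ as $\gamma_j\to 0$, so that $\gamma_j\max\{1,\log\gamma_j^{-1}\}^{\mu(1-\lambda)}\le\gamma_j^{1-\e}$ eventually. Combined with the freedom to pick $\lambda$ arbitrarily close to $1$ (equivalently $\e$ arbitrarily small), this shows the $\log$-corrected sums converge on exactly the same set of weight sequences as the uncorrected ones, closing the gap between the sufficiency and necessity bounds.
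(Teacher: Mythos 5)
Your necessity argument is essentially the paper's: the one-sided bound $r_{\mu,\gamma_j}(k)\ge c_1\gamma_j|k|^{-2\alpha}$ (the paper takes $\alpha=\mu/2$ and a constant $\tau$ independent of $j$) embeds a rescaled Korobov unit ball into the $\log$-Korobov unit ball, and the known necessary conditions from \cite{NW10} transfer. That part is sound. But two things in your write-up are wrong, one of them fatally for the sufficiency direction. First, the claimed upper comparison $r_{\mu,\gamma_j}(k)\le c_2\gamma_j|k|^{-2\alpha'}$ for some $\alpha'>1/2$ is false: for any $\alpha'>1/2$ the ratio $|k|^{2\alpha'-1}(\log(\kappa|k|))^{-\mu}\to\infty$, i.e.\ the $\log$-Korobov weights decay \emph{slower} than every $|k|^{-2\alpha'}$ with $\alpha'>1/2$ --- that is precisely why the paper needs the $\log$-scale machinery in the first place. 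This error happens not to be load-bearing, since you do not actually use the upper bound later, but it should not be stated as a ``key observation.''

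The genuine gap is in sufficiency. You invoke Theorem~\ref{thmcbc} with some $\lambda<1$ and then need $\sum_j\gamma_j\max\{1,\log\gamma_j^{-1}\}^{\mu(1-\lambda)}<\infty$; you claim this is equivalent to $\sum_j\gamma_j<\infty$ once $\lambda$ is close enough to $1$, via $\gamma_j\max\{1,\log\gamma_j^{-1}\}^{\mu(1-\lambda)}\le\gamma_j^{1-\e}$. That deduction fails twice over: $\sum_j\gamma_j^{1-\e}<\infty$ is strictly stronger than $\sum_j\gamma_j<\infty$ for every fixed $\e>0$, and no choice of $\lambda<1$ rescues the equivalence --- for $\gamma_j=\bigl(j\,(\log j)(\log\log j)^2\bigr)^{-1}$ one has $\sum_j\gamma_j<\infty$ while $\sum_j\gamma_j(\log\gamma_j^{-1})^{\delta}=\infty$ for every $\delta>0$. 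So your argument does not establish sufficiency of $\sum_j\gamma_j<\infty$ (nor the $\limsup$ and Ces\`aro variants). The paper sidesteps this entirely by running the proof of Theorem~\ref{thmcbc} with $\lambda=1$, for which $\max\{1,\log\gamma_j^{-1}\}^{\mu(1-\lambda)}\equiv 1$ and $\varphi^{-1}(y)\le y$, giving $e^2(\bsg^\ast)\le\frac{3}{N}\prod_{j=1}^s(1+2C_{\mu,\lambda,\kappa}\gamma_j)$; one sacrifices the logarithmic gain in the rate (irrelevant for tractability) and the product is then controlled directly by $\sum_{j=1}^s\gamma_j$ in all three cases. The refined condition involving the $\log$-corrected sum belongs to Theorem~\ref{thmweights}, where it is an additional hypothesis, not a consequence of $\sum_j\gamma_j<\infty$.
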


\begin{proof}
We start with showing the sufficiency of the conditions stated in the theorem. Note that the proof of Theorem~\ref{thmcbc} yields, for the special choice of $\lambda=1$,
\begin{equation}\label{eqtract}
e^2(\bsg^\ast)\le \varphi^{-1}\left(
\frac{3}{N}\prod_{j=1}^s \left(1+2C_{\mu,\lambda,\kappa}\ \gamma_{j}\right)\right).
\end{equation}

Suppose first that $\sum_{j=1}^\infty \gamma_j <\infty.$ Then
\begin{equation}\label{ineq_prod_sum}
\prod_{j=1}^s (1 + 2 C_{\mu, \lambda, \kappa} \gamma_j) = \exp\left(\sum_{j=1}^s \log (1 + 2 C_{\mu, \lambda, \kappa} \gamma_j )\right) 
\le \exp\left(2 C_{\mu, \lambda, \kappa} \sum_{j=1}^{\infty} \gamma_j \right),
\end{equation}
where we used $\log (1+x) \le x$ for $x \ge 0$. This implies strong polynomial tractability.

Suppose now that $\limsup_{s\To\infty} \tfrac{1}{\log s}\sum_{j=1}^s \gamma_j<\infty.$ Then by similar arguments as for strong polynomial tractability, 
\begin{eqnarray*}
\prod_{j=1}^s \left(1+2C_{\mu,\lambda,\kappa}\gamma_j\right)\le s^{\eta}
\end{eqnarray*}
for some positive constant $\eta> 0$ that depends on $\mu,\lambda$, and $\kappa$.
Now choose $N\ge 3s^\eta {\rm e}^{2\mu}$. We then obtain, by using~\eqref{eqtract} and
applying Lemma \ref{lemphi}\ref{lemphi_inv} with $\lambda=1$,
$$e^2(\bsg^\ast)\le \frac{3s^\eta}{N}.$$
Hence, in order to achieve a worst-case error of at most $\varepsilon$, it is sufficient to choose $N$ such that
$$N\ge 3s^\eta \max\left\{{\rm e}^{2\mu},\varepsilon^{-2}\right\},$$
which implies polynomial tractability.

Finally, assume that $\lim_{s\To\infty} \tfrac{1}{s}\sum_{j=1}^s \gamma_j=0$. %We have
%$$\prod_{j=1}^s \left(1+2C_{\mu,\lambda,\kappa}\gamma_j\right)\le \exp \left(2C_{\mu,\lambda,\kappa}\sum_{j=1}^s \gamma_j\right).$$
Choose $N\ge 3\exp(2C_{\mu,\lambda,\kappa}\sum_{j=1}^s \gamma_j){\rm e}^{2\mu}$.
We again apply~\eqref{eqtract} and Lemma \ref{lemphi}\ref{lemphi_inv} with $\lambda=1$ to obtain
$$e^2(\bsg^\ast)\le \frac{3 \exp(2C_{\mu,\lambda,\kappa}\sum_{j=1}^s \gamma_j)}{N}.$$
If we want to achieve a worst-case error of at most $\varepsilon$, we can choose $N$ such that
$$N\ge 3\exp \left(2C_{\mu,\lambda,\kappa}\sum_{j=1}^s \gamma_j\right)  \max\{{\rm e}^{2\mu},\varepsilon^{-2}\}.$$
Hence by the assumption that $\lim_{s\To\infty} \tfrac{1}{s}\sum_{j=1}^s \gamma_j=0$, we see that we indeed have weak tractability.

\medskip

Regarding necessity of the conditions, we consider the Korobov space $H_{K_{\alpha, \bsgamma}}$ with $\alpha=\mu/2$. 
For $\tau > 0$  sufficiently small we have $w_{\alpha, \gamma_j \tau}(k) \le r_{\mu, \gamma_j}(k)$ for all $k \in \mathbb{Z}$. 
Note that $\tau$ can be chosen independently of $\gamma_j$. Hence
\begin{equation*}
\|f\|^2_{H_{K_{\alpha, \tau \bsgamma}}} = 
\sum_{\bsk \in \mathbb{Z}^s} w^{-1}_{\alpha, \tau \bsgamma}(\bsk) |\widehat{f}(\bsk)|^2 \ge \sum_{\bsk \in \mathbb{Z}^s} r^{-1}_{\mu, \bsgamma}(\bsk) |\widehat{f}(\bsk)|^2 = 
\|f\|^2_{H_{K_{\log, \mu, \bsgamma}}},
\end{equation*}
and thus the unit ball of the Korobov space $H_{K_{\alpha, \tau \bsgamma}}$ is contained in the unit ball of the $\log$-Korobov space $H_{K_{\log, \mu, \bsgamma}}$. 
Consequently, the worst-case error of integration in the $\log$-Korobov space $H_{K_{\log, \mu, \bsgamma}}$ is at least as large as the worst-case error of integration in the 
Korobov space $H_{K_{\alpha, \bsgamma}}$. The necessary condition on $\bsgamma$ for achieving weak tractability in the Korobov space 
$H_{K_{\alpha, \bsgamma}}$ is $\lim_{s\To\infty} \tfrac{1}{s}\sum_{j=1}^s \tau\gamma_j=0$ (see, e.g.,~\cite[Theorem~16.5]{NW10}), which is equivalent to the necessary condition
on $\bsgamma$ stated in the theorem. For polynomial and strong polynomial
tractability we can proceed in the same way, 
again using~\cite[Theorem~16.5]{NW10}.
\end{proof}

Theorem~\ref{thmcbc} also allows us to obtain a more refined result for strong polynomial tractability. 
With a suitable choice of weights we can get rid of the dependence on $s$ in Theorem~\ref{thmcbc}.
\begin{theorem}\label{thmweights}
Let $\mu>1$, let 
$\bsgamma = (\gamma_j)_{j \in \mathbb{N}}$ be a sequence of positive real numbers and assume that 
$\kappa \ge \sup_{j \in \mathbb{N}} \max\{\exp(\mathrm{e}^2),\exp(\mathrm{e}^2 \gamma_j^{1/\mu})\}$ and that for some 
$1/\mu < \lambda \le 1$ we have $$\Gamma:=\sum_{j=1}^\infty \gamma_j \max\{1,\log \gamma_j^{-1}\}^{\mu(1-\lambda)} < \infty.$$
Then for the generating vector $\bsg^\ast$ constructed by Algorithm~\ref{algcbc} 
and the worst-case error in the space $H_{K_{\log, \mu, \bsgamma}}$ 
we have $$e^2(\bsg^\ast) \ll_{\mu,\lambda,\kappa,\Gamma} \frac{1}{N (\log N)^{\mu (1- \lambda)}},$$ where
the implied constant is independent of the dimension $s$.
\end{theorem}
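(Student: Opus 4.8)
The plan is to deduce Theorem~\ref{thmweights} directly from Theorem~\ref{thmcbc} by showing that the summability hypothesis on $\bsgamma$ forces the quantity $T_d(\mu,\lambda,\kappa,\bsgamma)$ to be bounded uniformly in $d$ (and hence in the dimension $s$). First I would rewrite the finite product in exponential form and apply the elementary estimate $\log(1+x)\le x$ for $x\ge 0$ to obtain
$$T_d(\mu,\lambda,\kappa,\bsgamma) = 3\exp\left(\sum_{j=1}^d \log\left(1 + 2C_{\mu,\lambda,\kappa}\gamma_j \max\{1,\log\gamma_j^{-1}\}^{\mu(1-\lambda)}\right)\right) \le 3\exp\left(2C_{\mu,\lambda,\kappa}\Gamma\right) =: T^\ast,$$
where the last inequality uses precisely the hypothesis $\Gamma=\sum_{j=1}^\infty \gamma_j\max\{1,\log\gamma_j^{-1}\}^{\mu(1-\lambda)}<\infty$. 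The crucial point is that $T^\ast$ depends only on $\mu,\lambda,\kappa$ and $\Gamma$, not on $d$ or $s$.

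Next I would exploit the monotonicity of the right-hand side of \eqref{eqcbc} in the parameter $T_d$. For fixed $N$ the map $t\mapsto t/(N(\log(N/t))^{\mu(1-\lambda)})$ is increasing on $(0,N)$, since the numerator increases while the denominator decreases (using $\mu(1-\lambda)\ge 0$, which holds because $\lambda\le 1$ and $\mu>1$). Hence, provided $N\ge \mathrm{e}^{2\mu}(2\mu)^{\mu(\lambda-1)}T^\ast$ — which also guarantees the admissibility condition $N\ge \mathrm{e}^{2\mu}(2\mu)^{\mu(\lambda-1)}T_d$ of Theorem~\ref{thmcbc}, as $T_d\le T^\ast$ — replacing $T_d$ by $T^\ast$ yields
$$e^2(\bsg^\ast) \le \frac{T^\ast}{N(\log(N/T^\ast))^{\mu(1-\lambda)}}.$$

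It then remains to replace $\log(N/T^\ast)$ by $\log N$ at the cost of a constant. Restricting to $N\ge (T^\ast)^2$ gives $\log(N/T^\ast)=\log N-\log T^\ast\ge \tfrac12\log N$, so that $(\log N)^{\mu(1-\lambda)}\le 2^{\mu(1-\lambda)}(\log(N/T^\ast))^{\mu(1-\lambda)}$, and therefore
$$e^2(\bsg^\ast) \le \frac{2^{\mu(1-\lambda)}T^\ast}{N(\log N)^{\mu(1-\lambda)}}.$$
Since $2^{\mu(1-\lambda)}T^\ast$ depends only on $\mu,\lambda,\kappa,\Gamma$ and not on $s$, this is exactly the asserted bound $e^2(\bsg^\ast)\ll_{\mu,\lambda,\kappa,\Gamma} N^{-1}(\log N)^{-\mu(1-\lambda)}$, valid for all sufficiently large $N$, with the two lower thresholds on $N$ absorbed into the implied constant.

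The argument is essentially bookkeeping once Theorem~\ref{thmcbc} is available; the only genuine content is the uniform bound $T_d\le T^\ast$, so the main thing to get right is that the summability hypothesis on $\bsgamma$ is precisely what makes the infinite product converge and yields a dimension-independent constant. A minor subtlety to keep in mind is the degenerate case $\lambda=1$, where $\mu(1-\lambda)=0$ and the logarithmic factors disappear; the same chain of inequalities then reduces to $e^2(\bsg^\ast)\ll T^\ast/N$ and goes through trivially.
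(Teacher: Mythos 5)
Your proposal is correct and follows essentially the same route as the paper: the whole content is the uniform bound $T_d\le 3\exp(2C_{\mu,\lambda,\kappa}\Gamma)$ obtained from $\log(1+x)\le x$ and the summability hypothesis, after which the error bound of Theorem~\ref{thmcbc} applies with a dimension-independent constant. The only cosmetic difference is that you invoke the stated conclusion \eqref{eqcbc} as a black box (hence need the monotonicity of $t\mapsto t/(N(\log(N/t))^{\mu(1-\lambda)})$ and the restriction $N\ge (T^\ast)^2$ to trade $\log(N/T^\ast)$ for $\tfrac12\log N$), whereas the paper reuses the intermediate inequality $\varphi(e^2(\bsg^\ast))\le T_d/N$ from the proof of Theorem~\ref{thmcbc} together with Lemma~\ref{lemphi}\ref{lemphi_inv}; both are valid and your extra bookkeeping with the thresholds on $N$ is fine.
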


\begin{proof}
If $\Gamma:=\sum_{j=1}^\infty \gamma_j \max\{1,\log \gamma_j^{-1}\}^{\mu(1-\lambda)} < \infty$, then using an argument as for the derivation of~\eqref{ineq_prod_sum} yields
$$
T_s (\mu,\lambda,\kappa,\bsgamma)
\le 3 \exp\left(2C_{\mu,\lambda,\kappa} \Gamma\right)=:B_{\mu,\lambda,\kappa,\bsgamma}\ \ \ \mbox{ for all $s \in \NN$.}
$$
From~\eqref{e_bound}, we then have 
\begin{equation}\label{e_bound1}
e^2(\bsg^\ast)\le \varphi^{-1} \left(B_{\mu,\lambda,\kappa,\bsgamma}/N\right).
\end{equation}
Now choose $N\ge B_{\mu,\lambda,\kappa,\bsgamma}$ (note that the choice of $N$ does not depend on $s$), and again
apply Lemma \ref{lemphi}\ref{lemphi_inv} to~\eqref{e_bound1}. This yields
$$e^2(\bsg^\ast)\le\frac{B_{\mu,\lambda,\kappa,\bsgamma}}{N
(\log (N/B_{\mu,\lambda,\kappa,\bsgamma}))^{\mu(1-\lambda)}}.$$
The result follows.
\end{proof}

If $\sum_{j=1}^\infty \gamma_j^{\tau} < \infty$ for $\tau = 1$ but not for any $\tau < 1$, then, for instance, \cite[Theorem~4]{K03} yields a convergence of order $\mathcal{O}(N^{-1/2})$. 
On the other hand, Theorem~\ref{thmweights} can yield a slightly better convergence rate if 
$\sum_{j=1}^\infty \gamma_j \max\{1, \log \gamma_j^{-1}\}^{\mu (1-\lambda)} < \infty$ 
and therefore yields a more precise estimate for such cases. For instance, in \cite{KSS12} additional conditions are needed when $\tau=1$, but it is not clear whether this can be partly avoided using Theorem~\ref{thmweights}.

\section{A possible extension}\label{sec_ext}

The results in this paper could also be extended in the following way. 
Let $\log_i (x)$ denote the $i$ times iterated logarithm, that is, $\log_0 (x) = x$, 
$\log_1 (x) = \log x$, $\log_2 (x) = \log \log x$ and so on. Let $\kappa$ be a fixed, sufficiently large, real number. Define
\begin{equation*}
r_{\mu, \gamma, \ell}(k) = \left\{ \begin{array}{ll} 
1 & \mbox{if } k = 0,\\
\gamma  (\log_\ell (\kappa |k|))^{-\mu} \prod_{i=0}^{\ell-1} 
(\log_i (\kappa |k|))^{-1} & \mbox{otherwise,}
\end{array} \right.
\end{equation*}
and $r_{\mu, \bsgamma, \ell}(\boldsymbol{k}) = \prod_{j=1}^s r_{\mu, \gamma_j, \ell}(k_j)$. Note that for any $\ell \in \mathbb{N}$ and $\mu > 1$ we have
\begin{equation*}
\sum_{k=1}^\infty r_{\mu, \ell}(k) < \infty.
\end{equation*}

We can again define a reproducing kernel
\begin{equation*}
K_{\log_\ell, \mu, \bsgamma}(\boldsymbol{x}, \boldsymbol{y}) = \sum_{\boldsymbol{k} \in \mathbb{Z}^s} r_{\mu, \bsgamma, \ell}(\boldsymbol{k}) \mathrm{e}^{2\pi \mathrm{i} \boldsymbol{k} \cdot (\boldsymbol{x} - \boldsymbol{y})}.
\end{equation*}
Using a modification of the function $\varphi$, similar results for the space  $H_{K_{\log_\ell, \mu, \bsgamma}}$ as for the space  $H_{K_{\log, \mu, \bsgamma}}$ could be obtained. \\

\noindent {\bf Addresses:} \\

Josef Dick, School of Mathematics and Statistics, The University of New South Wales, Sydney, 2052 NSW, Australia. e-mail: josef.dick(AT)unsw.edu.au \\

Peter Kritzer, Gunther Leobacher, Friedrich Pillichshammer, Department of Financial Mathematics, Johannes Kepler University Linz, Altenbergerstr. 69,
4040 Linz, Austria. e-mail: {peter.kritzer, gunther.leobacher, friedrich.pillichshammer}(AT)jku.at

\end{document}